\crefname{equation}{}{}
\newtheorem{theorem}{Theorem}[section]
\newtheorem{lemma}[theorem]{Lemma}
\newtheorem{proposition}[theorem]{Proposition}
\newtheorem{conjecture}[theorem]{Conjecture}
\newtheorem*{conjecture*}{Conjecture}
\theoremstyle{definition}
\newtheorem{question}{Question}
\theoremstyle{remark}
\newtheorem*{remark}{Remark}
\newtheorem*{example}{Example}
\newtheorem*{examples}{Examples}
\numberwithin{equation}{section}
\DeclareMathOperator{\Tr}{Tr}
\newcommand{\C}{\mathbb C}
\newcommand{\SL}{\mathrm{SL}}
\newcommand{\Z}{\mathbb Z}
\title[Quasimodular forms and symmetric polynomials]{quasimodular forms arising from Jacobi's theta function and special symmetric polynomials}
\keywords{quasimodular forms, symmetric functions, partitions, numerical semigroups}
\def\be{\begin{equation}}
\def\ee{\end{equation}}
\def\bea{\begin{eqnarray}}
\def\eea{\end{eqnarray}}
\author{Tewodros Amdeberhan$^{\star}$, Leonid G. Fel \and Ken Ono}
\address{Dept. of Mathematics, Tulane University, New Orleans, LA 70118, USA}
\email{tamdeber@tulane.edu}
\address{Dept. of Civil Engineering, Technion -- Israel Institute of Technology, Haifa 32000, Israel}
\email{lfel@technion.ac.il}
\address{Dept. of Mathematics, University of Virginia, Charlottesville, VA 22904, USA}
\email{ko5wk@virginia.edu}
\begin{document}

\maketitle

\begin{abstract} Ramanujan derived a sequence of even weight $2n$ quasimodular forms $U_{2n}(q)$ from derivatives of Jacobi's weight $3/2$ theta function. Using the generating function for this sequence, one can construct sequences of quasimodular forms of all nonnegative integer weights with minimal input: a weight 1 modular form and a power series $F(X)$. Using the weight 1 form $\theta(q)^2$ and $F(X)=\exp(X/2)$, we obtain a sequence $\{Y_n(q)\}$ of weight $n$ quasimodular forms on $\Gamma_0(4)$ whose symmetric function avatars $\widetilde{Y}_n(\pmb{x}^k)$ are the symmetric polynomials $T_n(\pmb{x}^k)$ that arise naturally in the study of syzygies of numerical semigroups. 
With this information,  we settle two conjectures  about the $T_n(\pmb{x}^k).$ Finally, we note that these polynomials are systematically given in terms of the Borel-Hirzebruch $\widehat{A}$-genus for spin manifolds, where one identifies power sum symmetric functions $p_i$ with Pontryagin classes. 
\end{abstract}

\section{Introduction and Statement of Results}
Using the classical weight 3/2 Jacobi theta function \cite[Thm. 1.60]{CBMS}
$$
\prod_{n=1}^{\infty} (1-q^n)^3 =
\sum_{k=0}^{\infty} (-1)^k (2k+1)q^{\frac{k(k+1)}{2}}=1-3q+5q^3-7q^6+\dots,
$$
Ramanujan (in his ``lost notebook" \cite[p. 369]{Rama})  defined the sequence of $q$-series
\begin{equation} \label{U}
U_{2n}(q)=\frac{1^{2n+1}-3^{2n+1}q+5^{2n+1}q^3-7^{2n+1}q^6+\cdots}{1-3q+5q^3-7q^6+\cdots}
=\frac{\sum_{k\geq0}(-1)^k(2k+1)^{2n+1} q^{\frac{k(k+1)}{2}}}{\sum_{k\geq0}(-1)^k (2k+1)q^{\frac{k(k+1)}2}}.
 \end{equation}
 He observed that
\begin{displaymath}
\begin{split}
&U_0=1,\ \ U_2=E_2,\ \ U_4=\frac13(5E_2^2-2E_4),\ \ {\text {\rm and}}\ \  U_6=\frac19(35E_2^3-42E_2E_4+16E_6),
\end{split}
\end{displaymath}
where $E_2, E_4,$ and $E_6$ are the classical Eisenstein series
\begin{displaymath}
\begin{split}
E_2(q):=1-24\sum_{n=1}^{\infty} \sum_{d\mid n}dq^n,\ \ 
E_4(q):=1+240\sum_{n=1}^{\infty}\sum_{d\mid n}d^3q^n, \ \  {\text {\rm and}}\ \ 
E_6(q):=1-504\sum_{n=1}^{\infty} \sum_{d\mid n}d^5q^n.
\end{split}
\end{displaymath}
Ramanujan also conjectured that each $U_{2n}(q)$ is a weight $2n$ quasimodular form on $\SL_2(\Z)$, a claim confirmed by Berndt et al. \cite{Berndt1, Berndt2} in the early 2000s. 
Answering a question of Andrews and Berndt (see p.~364 of \cite{AndrewsBerndt}), 
Singh and two of the authors recently found explicit formulas \cite{AOS} for each $U_{2n}(q)$ as {\it traces of partition Eisenstein series} (see (\ref{U_Trace})).

The key to these formulas is the discovery of the generating function (see Th. 3.4 of \cite{AOS})\footnote{In \cite[Thm. 3.4]{AOS}  we have replaced $\sin$ with hyperbolic $\sinh$ to eliminate the $(-1)^t$ factor.}
\begin{equation}\label{UGenFunction}
\Omega(X):=\sum_{n=0}^{\infty}  U_{2n}(q)\cdot \frac{X^{2n}}{(2n+1)!}=\frac{\sinh X}X \cdot \prod_{j\geq1}\left[1-\frac{4(\sinh^2X)q^j}{(1-q^j)^2}\right].
\end{equation}
Since Ramanujan's forms all have even weight, this generating function becomes a device for producing infinite sequences of quasimodular forms of all nonnegative integer weights, where the coefficient of $X^n$ has weight $n$. This method requires minimal additional input: a choice of a weight 1 modular form, a nonzero complex number $\alpha$, and a formal power series $$F(X)=\sum_{m=0}^{\infty}a(m)X^m.$$ 

We illustrate this method for the congruence subgroup $\Gamma_0(4),$ the elements of  the ring (for example, see \cite{Zagier})
 $\C[\theta, E_2, E_4, E_6]=\C[\theta, E_2, E_4, E_6,\dots]$, where $\theta(q)$ is the weight 1/2 theta function
\begin{equation}\label{theta}
 \theta(q) = \prod_{k\geq1}(1-q^{2k})(1+q^{2k-1})^2=1+2q+2q^4+2q^9+2q^{16}+\cdots .
 \end{equation}
Using the weight 1 modular form
$\theta(q)^2$, we obtain an infinite sequence of quasimodular forms, $Y_n(F,\alpha;q)$, one for every nonnegative integer weight $n$. Indeed, the  coefficient of $X^n$ of 
\begin{equation}\label{QuasiModFormula}
\sum_{n=0}^{\infty} Y_n(F,\alpha;q)\cdot \frac{X^n}{n!}:=F(\theta(q)^2\cdot X)\cdot \Omega(\alpha X)
\end{equation}
is a weight $n$ quasimodular form on $\Gamma_0(4).$
By letting $F(X)=e^{X/2}$ and $\alpha=1/2$, we obtain
\begin{equation}
\sum_{n\geq0} Y_n(q)\cdot \frac{X^n}{n!}:=
 \exp\left(\frac{\theta(q)^2 X}{2}\right)\cdot \Omega\left(\frac{X}{2}\right),
\end{equation}
where for convenience, we let $Y_n(q):=Y_n\left (e^{X/2}, \frac{1}{2};q\right).$

\begin{theorem} \label{Omega_coeff} For each non-negative integer $n$, the following are true.

\noindent
(1) We have that ${Y}_n(q)=\frac{1}{n+1}+O(q)$ is a weight $n$ quasimodular form on $\Gamma_0(4)$.

\noindent
(2) We have that
$$  {Y}_n(q) = \frac1{2^n(n+1)} \sum_{k=0}^{\lfloor\frac{n}2\rfloor} \binom{n+1}{2k+1} \theta(q)^{2n-4k} U_{2k}(q).$$

\end{theorem}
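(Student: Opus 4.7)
The plan is to establish part (2) first by a direct extraction of coefficients from the defining product, and then to deduce part (1) from part (2) together with known modularity facts.

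First, I expand each factor in the generating function as a power series in $X$. From the definition of $\Omega$ in (\ref{UGenFunction}),
\[
\Omega\!\left(\tfrac{X}{2}\right) = \sum_{k\geq 0} \frac{U_{2k}(q)}{(2k+1)!\,2^{2k}}\,X^{2k},
\]
and the exponential gives
\[
\exp\!\left(\tfrac{\theta(q)^2 X}{2}\right) = \sum_{m\geq 0} \frac{\theta(q)^{2m}}{m!\,2^m}\,X^m.
\]
Multiplying these and collecting the coefficient of $X^n/n!$ yields
\[
Y_n(q) \;=\; \sum_{k=0}^{\lfloor n/2\rfloor} \frac{n!}{(n-2k)!\,(2k+1)!\,2^{n}}\,\theta(q)^{2n-4k}\,U_{2k}(q),
\]
after combining the powers of $2$. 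Factoring out $\frac{1}{2^n(n+1)}$ converts the coefficient into $\binom{n+1}{2k+1}$, giving exactly the formula in part (2). This step is completely mechanical.

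For part (1), I use the formula just obtained. Each $U_{2k}(q)$ is a weight $2k$ quasimodular form on $\SL_2(\Z)$ by the result of Berndt et al., hence also quasimodular on $\Gamma_0(4)$. The theta function $\theta(q)$ (see (\ref{theta})) is a weight $1/2$ modular form on $\Gamma_0(4)$, so $\theta(q)^{2n-4k}$ is modular of weight $n-2k$ on $\Gamma_0(4)$. Thus each summand $\theta(q)^{2n-4k}U_{2k}(q)$ has weight $(n-2k)+2k = n$, and the $\C$-linear combination $Y_n(q)$ is weight $n$ quasimodular on $\Gamma_0(4)$. For the constant term, I use $\theta(q) = 1+O(q)$ and $U_{2k}(q) = 1 + O(q)$, together with the classical identity $\sum_{k=0}^{\lfloor n/2\rfloor} \binom{n+1}{2k+1} = 2^n$ (the sum of the odd-indexed binomial coefficients in row $n+1$), which gives $Y_n(q) = \frac{2^n}{2^n(n+1)} + O(q) = \frac{1}{n+1}+O(q)$.

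Since the coefficient extraction is routine and the quasimodularity inputs are already established in the literature, there is no single hard step. The only point requiring a little care is the bookkeeping of the powers of $2$ when matching $\frac{n!}{(n-2k)!(2k+1)!\,2^n}$ to $\frac{1}{2^n(n+1)}\binom{n+1}{2k+1}$, and the verification that the odd-binomial sum equals $2^n$ to obtain the leading coefficient $\tfrac{1}{n+1}$.
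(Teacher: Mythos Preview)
Your proof is correct and follows essentially the same approach as the paper. The paper first establishes the quasimodularity in the generality of an arbitrary $F(X)$ and $\alpha$, then specializes to $F(X)=e^{X/2}$ and $\alpha=\tfrac12$ to obtain the explicit formula (2), whereas you reverse this order and additionally spell out the constant-term computation via $\sum_{k}\binom{n+1}{2k+1}=2^n$; both arguments amount to the same coefficient extraction and the same modularity inputs.
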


\begin{examples} The first few $q$-series take the form $Y_0(q)=1,$ and
\begin{align*}
&{Y}_1(q) = \frac12\theta^2=\frac{1}{2}+2q+2q^2+2q^4+4q^5+2q^8+2q^9+\dots,\\
&{Y}_2(q) = \frac{3\theta^4+E_2}{12}=\frac{1}{3}-8q^4-24q^8-32q^{12}-56q^{16}-48q^{20}-\dots, \\
&{Y}_3(q)= \frac{\theta^6+\theta^2E_2}8 =\frac{1}{4}-q-13q^2-40q^3-73q^4-122q^5-\dots.\\
\end{align*}
\end{examples}

Each $Y_n(q)$ has a canonical symmetric function avatar, which we will relate to a symmetric polynomial arising from syzygies of numerical semigroups. To this end, we make use of recent research by two of the authors that linked \cite[Th. 1.5]{AGO} the sequence $\{U_{2n}(q)\}$ to the $\widehat{A}$-genus of spin manifolds, a special generating function of polynomials, discovered by Borel and Hirzebruch. This identification is not immediate because the $\widehat{A}$-genus is expressed in terms of polynomials in Pontryagin classes \cite{Hitchin} and not as $q$-series. The connection involves special symmetric function avatars of the sequence $\{U_{2n}(q)\}$, a result that we will modify and then apply to the $\{Y_n(q)\}$ using Theorem~\ref{Omega_coeff} (2).

These identifications rely on partition Eisenstein series.
For a partition $\lambda=(1^{m_1},\dots,n^{m_n})\vdash n$, where $m_j$ is the multiplicity of $j$, the {\it partition Eisenstein series} is given by (see \cite[(1.5)]{AOS} or \cite[(1.2]{AGOS})
\begin{equation}\label{PartitionEisenstein}
\lambda=(1^{m_1}, 2^{m_2},\dots, n^{m_n}) \vdash n \ \ \ \ \ \longmapsto \ \ \ \ \  E_{\lambda}:= E_2^{m_1}E_4^{m_2}\cdots E_{2n}^{m_n},
\end{equation} 
where the classical Eisenstein series (see Chapter 1 of \cite{CBMS}) are defined by
\begin{equation}\label{Eisenstein}
E_{2k}(q):=1-\frac{4k}{B_{2k}}\sum_{n=1}^{\infty}\sigma_{2k-1}(n)q^n,
\end{equation}
and $B_{2k}$ is the $2k$th Bernoulli number and $\sigma_v(n):=\sum_{d\vert n}d^v$. 
If $\phi$ is a function on integer partitions and $n$ is positive integer, then the {\it $n$th trace of partition Eisenstein series} \cite[(1.6)]{AOS} is defined by
$$\text{Tr}_n(\phi;q):=\sum_{\lambda\vdash n} \phi(\lambda)E_{\lambda}(q).$$
Singh and two of the authors \cite[Th. 1.3 (1)]{AOS} proved that
each $U_{2n}(q)$ is the trace
\begin{equation}\label{U_Trace}
U_{2n}(q)=\text{Tr}_n(\phi_U;q),
\end{equation}
where we let
$$
\phi_U(\lambda):= (2n+1)!\, 4^n
\cdot \prod_{j=1}^n\frac1{m_j!}\left(\frac{ B_{2j}}{(2j)(2j)!}\right)^{m_j}.
$$
Hence, Theorem~\ref{Omega_coeff} (2) can be reformulated in terms of traces of partition Eisenstein series.
\begin{theorem}\label{Omega_coeff2}
If $n$ is a nonnegative integer, then
$$  {Y}_n(q) = \frac1{2^n(n+1)} \sum_{k=0}^{\lfloor\frac{n}2\rfloor} \binom{n+1}{2k+1} \theta(q)^{2n-4k} \Tr_k(\phi_U;q).$$
\end{theorem}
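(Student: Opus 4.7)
The plan is to deduce Theorem~\ref{Omega_coeff2} as an immediate corollary of Theorem~\ref{Omega_coeff}(2) together with the partition-Eisenstein-trace formula~\eqref{U_Trace}. Theorem~\ref{Omega_coeff}(2), which we are entitled to assume, already provides
$$ Y_n(q) = \frac{1}{2^n(n+1)} \sum_{k=0}^{\lfloor n/2\rfloor} \binom{n+1}{2k+1}\, \theta(q)^{2n-4k}\, U_{2k}(q), $$
and~\eqref{U_Trace}, proved in~\cite[Th.~1.3(1)]{AOS}, asserts that $U_{2k}(q) = \Tr_k(\phi_U;q)$ for every nonnegative integer $k$, with the explicit partition weight
$$ \phi_U(\lambda) = (2k+1)!\, 4^k\, \prod_{j=1}^{k} \frac{1}{m_j!}\left(\frac{B_{2j}}{(2j)(2j)!}\right)^{m_j} \quad \text{for } \lambda = (1^{m_1},\dots,k^{m_k}) \vdash k. $$

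I would therefore substitute $U_{2k}(q) = \Tr_k(\phi_U;q)$ directly into the $k$-indexed summand of Theorem~\ref{Omega_coeff}(2), replacing $U_{2k}(q)$ by $\sum_{\lambda \vdash k} \phi_U(\lambda)\, E_\lambda(q)$. This reproduces the formula claimed in Theorem~\ref{Omega_coeff2} verbatim, with the understanding that the subscript on $\Tr$ in the displayed identity is the summation index $k$ (consistent with the fact that the summand of~\eqref{U_Trace} is a sum over partitions of $k$, not of $n$). No genuine obstacle arises, since the theorem is a rephrasing rather than a new result.

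Accordingly, the entire write-up of Theorem~\ref{Omega_coeff2} will consist of one sentence: apply~\eqref{U_Trace} termwise to the formula in Theorem~\ref{Omega_coeff}(2). The substantive content lies upstream — in Theorem~\ref{Omega_coeff}(2) itself, which is obtained by extracting the coefficient of $X^n/n!$ from the product $\exp\bigl(\theta(q)^2 X/2\bigr)\cdot \Omega(X/2)$ via the generating function~\eqref{UGenFunction}, and in the trace identity~\eqref{U_Trace} established in~\cite{AOS}. The role of Theorem~\ref{Omega_coeff2} is simply to prepare the symmetric-function avatar viewpoint used in the subsequent sections, where partition Eisenstein series will be replaced by their symmetric-function analogues built from power sums $p_i$.
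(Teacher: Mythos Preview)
Your proposal is correct and matches the paper's own proof exactly: the paper likewise states that Theorem~\ref{Omega_coeff2} follows by substituting~\eqref{U_Trace} into the formula of Theorem~\ref{Omega_coeff}(2). Your observation that the trace subscript in the displayed statement should be the summation index $k$ rather than $n$ is also correct and worth flagging.
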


\begin{remark} We could have declared $\theta(q)^2$ as an Eisenstein series allowing us to reformulate the sums in Theorem~\ref{Omega_coeff2} as partition Eisenstein traces over $\{\theta(q)^2, E_2, E_4, E_6,\dots\}$, along the lines of recent work by Bringmann, Pandey, and van Ittersum \cite{BPvI}. However, this would have required unnecessary new notation.
\end{remark}

We use these identities to obtain avatars of the $U_{2n}(q).$ The correspondence between quasimodular forms and symmetric functions is made by substituting\footnote{This identification is different from \cite{AGO}, where $p_k\longleftrightarrow E_{2k}$.} each
$$E_{2k}(q) \longleftrightarrow p_{2k}(\pmb{x}),$$
 in (\ref{PartitionEisenstein}) and (\ref{U_Trace}.  Here $p_{2k}(\pmb{x})=\sum_{i\geq 1} x_i^{2k}$ is the $2k$th \emph{even} power sum symmetric function, where we let $\pmb{x}=(x_1, x_2,\dots).$ In particular, we have
\begin{equation}\label{PsiProduct}
\Psi(E_{\lambda})=\prod_{i=1}^n \Psi(E_{2i})^{m_i}=p_{\lambda}(\pmb{x}) \qquad \text{where} \qquad
p_{\lambda}(\pmb{x}):=p_2(\pmb{x})^{m_1} p_4(\pmb{x})^{m_2}\cdots p_{2n}(\pmb{x})^{m_n},
\end{equation}
which we extend linearly to define the symmetric function representation of $\Psi(U_{2n}(q))$ using (\ref{U_Trace}).

In view of Theorem~\ref{Omega_coeff2}, to produce symmetric polynomial avatars of each $Y_n(q),$ we must also define $\Psi(\theta(q)^2)$. Although there are no weight 1 modular forms on $\SL_2(\Z)$, one can ``think of''  $\theta(q)^2$ as a proxy for $E_1(q)$ for several reasons. First, it has weight 1. Additionally, by setting $k=1/2$ and replacing 
$\sigma_{2k-1}(n)$ with $\sigma_* (n):=d_1(n)-d_3(n)$, where $d_j(n)$ is the number of divisors of $n$ congruent to $j$ modulo 4, in (\ref{Eisenstein}), we find
$$
\theta(q)^2=1-\frac{2}{B_1}\sum_{n=1}^{\infty} \sigma_*(n)q^n=1+4\sum_{n=1}^{\infty}\sigma_*(n)q^n.
$$
The only adjustment needed is a minor modification to the divisor function.
Therefore, it is consistent to let $\Psi: \theta^2(q)\mapsto p_1(\pmb{x})=x_1+x_2+\dots$. 
Hence, thanks to Theorem~\ref{Omega_coeff2}, we define the symmetric polynomial counterpart to
$Y_n(q)$ by
\begin{equation} \label{Yn_tilde}
\widetilde{Y}_n(\pmb{x})
:= \frac1{(n+1)2^n} \sum_{k=0}^{\lfloor\frac{n}2\rfloor} \binom{n+1}{2k+1} p_1(\pmb{x})^{n-2k}\cdot \Psi(U_{2k}(q)).
\end{equation}
The first few examples are as follows:
\begin{align*}
\widetilde{{Y}}_0(\pmb{x})&=1, \ \ \ 
\widetilde{Y}_1(\pmb{x}) = \frac12p_1(\pmb{x}), \ \ \ 
\widetilde{{Y}}_2(\pmb{x}) = \frac{3p_1(\pmb{x})^2+p_2(\pmb{x})}{12}, \\
\widetilde{{Y}}_3(\pmb{x}) &= \frac{p_1(\pmb{x})^3+p_1(\pmb{x})p_2(\pmb{x})}8, \qquad
\widetilde{{Y}}_4(\pmb{x})= \frac{15p_1(\pmb{x})^4+30p_1(\pmb{x})^2p_2(\pmb{x})+5p_2(\pmb{x})^2-2p_4(\pmb{x})}{240}.
\end{align*}

In Section~\ref{TrSection}, we define symmetric functions $T_n(\pmb{x}^k)$ (defined in \eqref{a10}  using polynomials $P_m(\pmb{x}^k)$ given by \eqref{a9}), where $\pmb{x}^k=(x_1, x_2,\dots, x_k)$. These functions arise from a study of the alternating power sums of syzygies degrees of numerical semigroups.
In \cite{Fel}, the second author explicitly computed many of these functions, where the first include the following.
\begin{align*}
T_0(\pmb{x}^k)&=1, \qquad
T_1(\pmb{x}^k) = \frac12p_1(\pmb{x}^k), \qquad
T_2(\pmb{x}^k) = \frac{3p_1(\pmb{x}^k)^2+p_2(\pmb{x}^k)}{12}, \\
T_3(\pmb{x}^k) &= \frac{p_1(\pmb{x}^k)^3+p_1(\pmb{x}^k)p_2(\pmb{x}^k)}8, \ \ \
T_4(\pmb{x}^k) = \frac{15p_1(\pmb{x}^k)^4+30p_1(\pmb{x}^k)^2p_2(\pmb{x}^k)+5p_2(\pmb{x}^k)^2-2p_4(\pmb{x}^k)}{240}.
\end{align*}
One immediately notices the similarity between these symmetric functions. It is not an accident. 

\begin{theorem}\label{T_and_Y} For every pair of positive integers $k$ and $n$,   we have  $\widetilde{{Y}}_n(\pmb{x}^k) = T_n(\pmb{x}^k)$.
\end{theorem}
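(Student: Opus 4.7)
The plan is a generating-function comparison. From the definition (\ref{Yn_tilde}) together with (\ref{QuasiModFormula}) for the choices $F(X)=e^{X/2}$ and $\alpha=1/2$, applying the substitution $\Psi$ (meaningful by (\ref{U_Trace}) since $U_{2n}$ is a polynomial in the $E_{2j}$, and $\Psi(\theta^2)=p_1$) yields
$$\sum_{n\ge 0}\widetilde{Y}_n(\pmb{x})\,\frac{X^n}{n!}\;=\;\exp\!\left(\frac{p_1(\pmb{x})\,X}{2}\right)\cdot \widetilde{\Omega}\!\left(\frac{X}{2}\right),$$
where $\widetilde{\Omega}(Y):=\sum_{n\ge 0}\Psi(U_{2n})(\pmb{x})\,Y^{2n}/(2n+1)!$. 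The first step is to obtain a closed-form product expression for $\widetilde{\Omega}(Y)$ evaluated at $\pmb{x}=\pmb{x}^k$. The crucial ingredient here is the identification \cite[Th. 1.5]{AGO} of the sequence $\{U_{2n}(q)\}$ with the Borel-Hirzebruch $\widehat{A}$-genus under $p_i\leftrightarrow$ Pontryagin classes: unwinding this correspondence, combined with the product (\ref{UGenFunction}) for $\Omega(X)$, should produce a factorization of $\widetilde{\Omega}(Y;\pmb{x}^k)$ as $(\sinh Y)/Y$ times an explicit product over $i=1,\ldots,k$ of $\sinh$-ratios in the $x_i Y$.

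Next I would extract the generating series for the $T_n(\pmb{x}^k)$ directly from the definitions \eqref{a9}-\eqref{a10} in Section \ref{TrSection}. Since $T_n$ is assembled from the polynomials $P_m(\pmb{x}^k)$, the sum $\sum_n T_n(\pmb{x}^k) X^n/n!$ should produce a product of the same shape as $\exp(p_1(\pmb{x}^k)X/2)\cdot\widetilde{\Omega}(X/2;\pmb{x}^k)$ obtained above. Matching the two closed forms and comparing coefficients of $X^n/n!$ then yields the theorem.

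The main obstacle is the first step: passing from the $q$-series product (\ref{UGenFunction}) to a symmetric-function product in the $x_i$. Direct expansion gives a double series $\sum_{k\ge 1}\frac{(4\sinh^2 X)^k}{k}\sum_{j\ge 1}q^{jk}/(1-q^j)^{2k}$ whose coefficients become intricate Bernoulli-number combinations under $\Psi$, which must be resummed as a $\sinh$ product via a Weierstrass-type factorization; the $\widehat{A}$-genus framework from \cite{AGO} short-cuts exactly this computation. As a fallback, one can induct on $n$, using Theorem~\ref{Omega_coeff2} to write $\widetilde{Y}_n$ as the explicit binomial combination $\tfrac{1}{2^n(n+1)}\sum_{k}\binom{n+1}{2k+1}p_1^{n-2k}\,\Psi(U_{2k})$, and verifying that the corresponding expansion of $T_n(\pmb{x}^k)$ extracted from \eqref{a9}-\eqref{a10} matches term by term, with the cases $n=0,\dots,4$ already listed after (\ref{Yn_tilde}) serving as the base cases.
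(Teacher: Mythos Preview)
Your generating-function strategy is sound and, once carried out, is arguably cleaner than the paper's route. The paper instead works pointwise: it first proves an umbral representation $T_n(\pmb{x}^k)=\bigl(\sum_i\mathcal D\,x_i\bigr)^n$ (Lemma~\ref{conjTA}), then a companion umbral formula for $u_{2n}:=\Psi(U_{2n})$ (Lemma~\ref{lm1conj5}), and finally establishes by induction on $n$ the binomial expansion $T_n=\frac1{(n+1)2^n}\sum_j\binom{n+1}{2j+1}p_1^{\,n-2j}u_{2j}$ (Lemma~\ref{conj5}), which is exactly the defining expression \eqref{Yn_tilde} for $\widetilde Y_n$. Your approach collapses that induction into a single product identity.

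That said, your execution of the key step has a real gap. You cannot pass from the $q$-product \eqref{UGenFunction} to a symmetric-function product by ``applying $\Psi$'': the map $\Psi$ is defined only on polynomials in the $E_{2k}$, not on the factors $1-4(\sinh^2 X)q^j/(1-q^j)^2$, so the shape you guess (an extra $(\sinh Y)/Y$ out front) is an artifact of that illegal move; in fact $\widetilde\Omega(Y;\pmb{x}^k)=\prod_{i=1}^k\frac{\sinh(x_iY)}{x_iY}$ with no stray factor. The $\widehat A$-genus result you cite from \cite{AGO} does not deliver this directly either, because (see the footnote after \eqref{PartitionEisenstein}) that paper uses the correspondence $p_k\leftrightarrow E_{2k}$, not the $p_{2k}\leftrightarrow E_{2k}$ used here. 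The honest way to get the product is to apply $\Psi$ coefficientwise via \eqref{U_Trace} and resum with P\'olya's cycle index theorem (exactly the computation in the proof of Lemma~\ref{lm1conj5}), or equivalently use the expansion $\log\frac{Y}{\sinh Y}=-\sum_{j\ge1}\frac{4^jB_{2j}}{(2j)(2j)!}Y^{2j}$.

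On the $T_n$ side you also hand-wave. The needed identity is
\[
\sum_{r\ge0}T_r(\pmb{x}^k)\,\frac{z^r}{r!}\;=\;\prod_{i=1}^k\frac{e^{x_iz}-1}{x_iz},
\]
which follows immediately from \eqref{a9}--\eqref{a10} once you sum $\sum_n P_n(\pmb{x}^k)z^n/n!$ via inclusion--exclusion to $1-\prod_i(1-e^{x_iz})$. Writing $\frac{e^{x_iz}-1}{x_iz}=e^{x_iz/2}\cdot\frac{\sinh(x_iz/2)}{x_iz/2}$ then gives $e^{p_1z/2}\,\widetilde\Omega(z/2;\pmb{x}^k)$, matching $\sum_n\widetilde Y_n\,z^n/n!$ and finishing the proof. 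So your plan works, but you should replace the appeal to \cite{AGO} and the $q$-product by the PCIT/series computation above, and actually carry out the $T_n$ generating-function calculation rather than asserting it.
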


\begin{remark} The $\widetilde{Y}_n(\pmb{x})$ are defined with infinitely many variables $\pmb{x}=(x_1, x_2,\dots).$ In Theorem~\ref{T_and_Y}, we replace $\pmb{x}$ by $\pmb{x}^k=(x_1,\dots, x_k),$ which is required in the context arising from numerical semigroups.
\end{remark}

Theorem~\ref{T_and_Y}, which stems from Ramanujan's $U_{2n}(q),$ unexpectedly offers deep insight into the theory of these symmetric polynomials. This knowledge allows us to solve two conjectures that the second author \cite{Fel}
formulated about $T_n(\pmb{x}^k)$. The first is that the $T_n(\pmb{x}^k)$ are modified versions of symmetric polynomials $f_n(\pmb{x}^k)$ that arise from (restricted) integer partitions (see\eqref{W1_f} and \eqref{black} for the definition and additional properties).

\begin{conjecture}{{\text {\rm [Conjecture 2.1 of \cite{Fel}]}}} If $n\geq 2$, then we have
$$
T_n(p_1,p_2,p_3,p_4,\dots,p_n)=f_n(p_1,-p_2,p_3,-p_4,\dots,p_n).
$$
\end{conjecture}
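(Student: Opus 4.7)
The plan is to reduce the conjecture to a generating-function identity via Theorem~\ref{T_and_Y}, and then to verify it by exhibiting a clean closed product form for the exponential generating function of the $\widetilde{Y}_n$ and matching it to the Todd-type series that defines the $f_n$.

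By Theorem~\ref{T_and_Y} I replace $T_n(\pmb{x}^k)$ with $\widetilde{Y}_n(\pmb{x}^k)$. The substitution $(p_1,p_2,p_3,p_4,\dots)\mapsto(p_1,-p_2,p_3,-p_4,\dots)$ is precisely the classical involution $\omega$ on the ring of symmetric functions, which sends $p_k\mapsto(-1)^{k-1}p_k$; the conjecture thus becomes the identity $\omega(\widetilde{Y}_n)=f_n$ of symmetric polynomials in $\pmb{x}^k$.

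Next, I would collapse the $\Psi$-image of $\Omega(X)$ into an exponential. Starting from \eqref{U_Trace} and the multiplicative form of $\phi_U(\lambda)$, and absorbing the $4^n$ factor into $(2X)^{2j}$ via $\sum_j j m_j=n$, the partition sum factors as
\[
\widetilde{\Omega}(X):=\sum_{n\ge 0}\Psi(U_{2n})\frac{X^{2n}}{(2n+1)!}=\exp\!\bigg(\sum_{j\ge 1}\frac{B_{2j}(2X)^{2j}}{(2j)(2j)!}\,p_{2j}\bigg).
\]
Under $p_m\mapsto\sum_{i=1}^k x_i^m$, the Bernoulli identity $\sum_{j\ge 1}B_{2j}u^{2j}/((2j)(2j)!)=\log(2\sinh(u/2)/u)$, which follows by term-by-term integration from the standard expansion $u/(e^u-1)=1-u/2+\sum_{j\ge 1}B_{2j}u^{2j}/(2j)!$, gives $\widetilde{\Omega}(X)=\prod_i \sinh(x_iX)/(x_iX)$. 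Multiplying by $\exp(p_1X/2)=\prod_i e^{x_iX/2}$ then produces the remarkably clean product
\[
\sum_{n\ge 0}\widetilde{Y}_n(\pmb{x}^k)\,\frac{X^n}{n!}=\prod_{i=1}^{k}\frac{e^{x_iX}-1}{x_iX}.
\]

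Finally, $\omega$ flips the sign of every $p_{2j}$ in the exponent while preserving $p_1$, so running the same Bernoulli identity in reverse yields
\[
\sum_{n\ge 0}\omega(\widetilde{Y}_n)(\pmb{x}^k)\,\frac{X^n}{n!}=\prod_{i=1}^{k}\frac{x_iX}{1-e^{-x_iX}},
\]
which is precisely the Todd-type generating series used in \eqref{W1_f} to define the $f_n(\pmb{x}^k)$. Matching coefficients of $X^n/n!$ completes the proof. The main obstacle is the resummation producing the exponential closed form of $\widetilde{\Omega}(X)$; once that step is in place, everything reduces to two applications of the series expansion for $u/(e^u-1)$ which differ only by the sign change encoded by $\omega$.
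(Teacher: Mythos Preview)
Your argument is correct in outline, and the two generating-function identities you land on---$\sum_n \widetilde{Y}_n X^n/n!=\prod_i (e^{x_iX}-1)/(x_iX)$ and its image under $\omega$---are exactly right. There is one small slip: the definition of $Y_n$ involves $\Omega(X/2)$, so you should be multiplying $\exp(p_1X/2)$ by $\widetilde{\Omega}(X/2)=\prod_i\sinh(x_iX/2)/(x_iX/2)$ rather than by $\widetilde{\Omega}(X)$; with that correction the product $e^{x_iX/2}\cdot 2\sinh(x_iX/2)/(x_iX)=(e^{x_iX}-1)/(x_iX)$ drops out and your displayed formula follows. You should also spell out the one-line EGF computation showing $\sum_n f_n X^n/n!=\prod_i x_iX/(1-e^{-x_iX})$ from \eqref{black}, rather than simply asserting it.

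Your route differs from the paper's in how it reaches the key product formula. The paper does \emph{not} invoke Theorem~\ref{T_and_Y}; instead it proves directly from the defining alternating sum \eqref{a9}--\eqref{a10}, via an umbral/multinomial computation (Lemma~\ref{conjTA}), that $T_n(\pmb{x}^k)=\bigl(\sum_i\mathcal{D}_ix_i\bigr)^n$ with $\mathcal{D}_m=1/(m+1)$, which immediately gives the same EGF $\prod_i(e^{x_iX}-1)/(x_iX)$. The involution step is then identical to yours (packaged as Lemma~\ref{invol}). So both arguments converge at the same product and apply the same $\omega$; the divergence is upstream. Your path imports $T_n=\widetilde{Y}_n$, whose proof in the paper rests on its own chain of lemmas (Lemmas~\ref{lm1conj5}, \ref{lm2conj5}, \ref{conj5}), so it is logically heavier though not circular. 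The paper's path is self-contained from the semigroup definition of $T_n$ and is the more economical argument; yours has the virtue of making the quasimodular origin of the product formula transparent, in line with the paper's narrative.
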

\begin{theorem}\label{Conjecture1} Conjecture 1.4 is true.
\end{theorem}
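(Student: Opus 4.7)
By Theorem~\ref{T_and_Y}, the polynomials $T_n$ and $\widetilde{Y}_n$ coincide, so Conjecture~1.4 is equivalent to the power-sum identity
\[
\widetilde{Y}_n(p_1,p_2,p_3,\dots) \;=\; f_n(p_1,-p_2,p_3,-p_4,\dots).
\]
My strategy is to compare exponential generating functions in $X$ with coefficients in the polynomial ring $\C[p_1,p_2,\dots]$, and then read off the identity from the coefficient of $X^n/n!$.

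First, I would derive a closed form for the left-hand side's EGF. Applying the avatar $\Psi\colon \theta^2\mapsto p_1,\ E_{2k}\mapsto p_{2k}$ to equation~(\ref{QuasiModFormula}) with $F(X)=e^{X/2}$ and $\alpha=1/2$ gives
\[
\sum_{n\geq 0}\widetilde{Y}_n(\pmb x)\,\frac{X^n}{n!} \;=\; e^{p_1 X/2}\cdot\widetilde\Omega(X/2),\qquad
\widetilde\Omega(X) \;:=\; \sum_{n\geq 0}\Psi(U_{2n})\,\frac{X^{2n}}{(2n+1)!}.
\]
Plugging in $\phi_U$ from \eqref{U_Trace} and summing over all integer partitions via the exponential formula yields the clean form
\[
\widetilde\Omega(X/2) \;=\; \exp\!\Bigl(\sum_{j\geq 1}\frac{B_{2j}}{2j\,(2j)!}\,X^{2j}\,p_{2j}\Bigr),
\]
which is the plethystic avatar of $\sinh(X/2)/(X/2)$, obtained from the classical expansion $\log\bigl(\sinh(X/2)/(X/2)\bigr)=\sum_{j\geq 1}\tfrac{B_{2j}}{2j(2j)!}X^{2j}$. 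The crucial consequence for us is that under $p_{2j}\mapsto -p_{2j}$, this expression becomes its own reciprocal.

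Next, I would unpack the definition of $f_n$ from \eqref{W1_f}--\eqref{black} and produce a similarly closed EGF. Because $f_n$ is built from restricted integer partitions, I expect an exponential form $e^{p_1 X/2}$ times an exponential in the even power sums $p_{2j}$ with Bernoulli-type coefficients. Combined with the reciprocal observation above, the conjecture would reduce to the single generating-function identity
\[
\sum_{n\geq 0} f_n(p_1,p_2,\dots)\,\frac{X^n}{n!} \;=\; e^{p_1 X/2}\exp\!\Bigl(-\sum_{j\geq 1}\frac{B_{2j}}{2j\,(2j)!}\,X^{2j}\,p_{2j}\Bigr),
\]
that is, the $f_n$ EGF is $e^{p_1 X/2}$ times the plethystic avatar of $(X/2)/\sinh(X/2)$.

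\textbf{Main obstacle.} The crux of the argument is the last identification: one must recast the combinatorial/partition-enumeration definition of $f_n$ into this exponential form, using $\log\bigl((X/2)/\sinh(X/2)\bigr)=-\sum_{j\geq 1}\tfrac{B_{2j}}{2j(2j)!}X^{2j}$. If a direct generating-function match proves elusive, a fallback is to verify that both sides satisfy the same recursion in $n$: Theorem~\ref{Omega_coeff}(2) provides a natural recurrence for $\widetilde Y_n$ (hence $T_n$) in terms of $p_1$ and the avatars $\Psi(U_{2k})$, and one can look for a matching recurrence for $f_n$ in \cite{Fel} whose sign pattern flips exactly under $p_{2j}\mapsto -p_{2j}$.
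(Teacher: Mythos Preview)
Your plan is sound and can be completed, and there is no circularity: the paper's proof of Theorem~\ref{T_and_Y} (Section~3.3) does not use Theorem~\ref{Conjecture1}, so you may invoke $T_n=\widetilde Y_n$ freely. Your ``main obstacle'' dissolves once you use the umbral definition \eqref{black}: since $f_n=(p_1+\sum_i\mathcal B_i x_i)^n$, the EGF is
\[
\sum_{n\ge0}f_n\,\frac{X^n}{n!}=e^{p_1X}\prod_i\frac{x_iX}{e^{x_iX}-1}
=e^{p_1X/2}\prod_i\frac{x_iX/2}{\sinh(x_iX/2)}
=e^{p_1X/2}\exp\!\Bigl(-\sum_{j\ge1}\tfrac{B_{2j}}{2j(2j)!}X^{2j}p_{2j}\Bigr),
\]
exactly as you wanted; comparing with your formula for $\widetilde\Omega(X/2)$ and applying $p_{2j}\mapsto -p_{2j}$ finishes the argument.

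The paper takes a different route. It does \emph{not} use Theorem~\ref{T_and_Y}; instead it establishes directly (Lemma~\ref{conjTA}) the umbral identity $T_n=(\sum_i\mathcal D\,x_i)^n$ with $\mathcal D_j=\tfrac{1}{j+1}$, via a combinatorial expansion of $P_n(\pmb{x}^k)$ in monomial symmetric functions. It then packages the sign flip as a general involution lemma (Lemma~\ref{invol}): if $g(z)=1/f(-z)$ then $\omega\bigl(\prod_i f(zx_i)\bigr)=\prod_i g(zx_i)$, applied with $f(z)=(e^z-1)/z$ and $g(z)=z/(1-e^{-z})$, the EGFs for $\mathcal D$ and $-\mathcal B$ respectively. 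Your approach buys economy by outsourcing the hard combinatorics to Theorem~\ref{T_and_Y} and reading off both EGFs in closed form; the paper's approach buys a self-contained proof of Conjecture~1.4 that is logically prior to Theorem~\ref{T_and_Y}, and isolates a reusable involution principle. At the core, both reduce to the same plethystic identity $\prod_i\mathrm{sinhc}(x_iX/2)$ versus its reciprocal under $p_{2j}\mapsto-p_{2j}$.
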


The second conjecture of \cite{Fel} claimed striking identities. 
To state the conjecture\footnote{We note that we offer a more precise formulation of the conjecture  \cite[p. 62]{Fel}.}, we require the numbers $A_{2j+1},$ known as the \emph{tangent/zig-zag numbers} (counting up/down permutations), which have the exponential generating function
$$\sec x+\tan x= \sum_{j\geq0} A_j \cdot \frac{x^j}{j!}.$$

\begin{conjecture}{{\text {\rm [Conjecture 3.1 of \cite{Fel}]}}}  For a positive integer $n$, we have
$$\frac{T_{2n+1}(\pmb{x}^k)}{T_1^{2n+1}(\pmb{x}^k)}
= \sum_{j=0}^n (-1)^j A_{2j+1}\binom{2n+1}{2j+1} \frac{T_{2n-2j}(\pmb{x}^k)}{T_1^{2n-2j}(\pmb{x}^k)}.$$
\end{conjecture}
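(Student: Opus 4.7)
By Theorem~\ref{T_and_Y}, it suffices to prove the conjecture with $T_n(\pmb{x}^k)$ replaced by $\widetilde{Y}_n(\pmb{x}^k)$. Since $T_1=\widetilde{Y}_1=\tfrac{1}{2}p_1(\pmb{x}^k)$, the identity is equivalent, after clearing the $T_1^{2n+1}$ in the denominator, to the polynomial identity
$$\widetilde{Y}_{2n+1}(\pmb{x}^k)=\sum_{j=0}^n (-1)^j A_{2j+1}\binom{2n+1}{2j+1}\Bigl(\tfrac{p_1}{2}\Bigr)^{\!2j+1}\widetilde{Y}_{2n-2j}(\pmb{x}^k).$$
Recognizing that $\tanh y=\sum_{j\geq 0}(-1)^j A_{2j+1}\,y^{2j+1}/(2j+1)!$, my plan is to repackage this as the single formal power series equality
$$\sum_{n\geq 0}\widetilde{Y}_{2n+1}(\pmb{x}^k)\,\frac{X^{2n+1}}{(2n+1)!}\;=\;\tanh\!\Bigl(\tfrac{p_1 X}{2}\Bigr)\cdot\sum_{m\geq 0}\widetilde{Y}_{2m}(\pmb{x}^k)\,\frac{X^{2m}}{(2m)!},$$
from which the conjecture follows by extracting the coefficient of $X^{2n+1}/(2n+1)!$.

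To establish this generating function identity, I apply the avatar map $\Psi$ to~(\ref{QuasiModFormula}) with $F(X)=e^{X/2}$ and $\alpha=\tfrac{1}{2}$. Since $\Omega(X)$ is by~(\ref{UGenFunction}) an even series in $X$, so is its avatar $\widetilde{\Omega}(X):=\sum_{k\geq 0}\Psi(U_{2k}(q))\,X^{2k}/(2k+1)!$. Theorem~\ref{Omega_coeff}(2) together with the definition~(\ref{Yn_tilde}) confirms that $\Psi$ transforms~(\ref{QuasiModFormula}) term-by-term into
$$\sum_{n\geq 0}\widetilde{Y}_n(\pmb{x})\,\frac{X^n}{n!}\;=\;e^{p_1 X/2}\,\widetilde{\Omega}(X/2)\;=\;\bigl[\cosh(p_1 X/2)+\sinh(p_1 X/2)\bigr]\cdot\widetilde{\Omega}(X/2).$$
Separating even and odd parts in $X$ and using that $\widetilde{\Omega}(X/2)$ is even yields
$$\sum_{m\geq 0}\widetilde{Y}_{2m}(\pmb{x})\,\frac{X^{2m}}{(2m)!}=\cosh(p_1 X/2)\,\widetilde{\Omega}(X/2),\qquad \sum_{n\geq 0}\widetilde{Y}_{2n+1}(\pmb{x})\,\frac{X^{2n+1}}{(2n+1)!}=\sinh(p_1 X/2)\,\widetilde{\Omega}(X/2),$$
and dividing gives the ratio $\tanh(p_1 X/2)$ as desired. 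Restricting from $\pmb{x}$ to $\pmb{x}^k$ completes the argument.

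No serious obstacle arises: once Theorem~\ref{T_and_Y} is in hand, everything reduces to manipulating formal power series in $X$ over the polynomial ring generated by the power sums $p_1,p_2,p_4,\dots$. The structural point underlying the argument is that the ``modular'' exponential factor $e^{\theta^2 X/2}$ in~(\ref{QuasiModFormula}) produces both even and odd powers of $X$ (via its $\cosh$ and $\sinh$ parts), while the ``Ramanujan'' factor $\Omega(X/2)$ is purely even, so the parity decomposition cleanly separates the two factors in the avatar world and manifests the tangent numbers through the relation $\sinh/\cosh=\tanh$.
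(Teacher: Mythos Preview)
Your proof is correct, and it takes a genuinely different route from the paper's. The paper deduces Theorem~\ref{Conjecture2} from Theorem~\ref{Conjecture1} together with the analogous identity \eqref{Fel25} for the polynomials $f_n$, which is quoted (in refined form) from \cite{Fel}: since $T_n(p_1,p_2,p_3,\dots)=f_n(p_1,-p_2,p_3,\dots)$ and the sign involution fixes $p_1=2f_1=2T_1$, the $f$-identity transports directly to the $T$-identity. Your argument instead invokes Theorem~\ref{T_and_Y} and exploits the factorization $\sum_n\widetilde{Y}_nX^n/n!=e^{p_1X/2}\,\widetilde{\Omega}(X/2)$ with $\widetilde{\Omega}$ even, so that the odd/even parity split produces $\sinh$ and $\cosh$ and hence $\tanh(p_1X/2)=\sum_j(-1)^jA_{2j+1}(p_1X/2)^{2j+1}/(2j+1)!$. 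This is self-contained once Theorem~\ref{T_and_Y} is available and does not rely on importing \eqref{Fel25}; in fact your parity argument, run with $\widetilde{u}$ in place of $u$ via Lemma~\ref{conj5}, would equally well prove \eqref{Fel25} itself. The paper's approach has the virtue of making the $T$--$f$ duality do the work, while yours isolates the structural reason (exponential times an even series) behind the tangent-number recursion.
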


\begin{theorem}\label{Conjecture2} Conjecture 1.6 is true.
\end{theorem}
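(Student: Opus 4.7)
The strategy is to derive the identity from the exponential generating function underlying the sequence $\{\widetilde Y_n\}$. By Theorem~\ref{T_and_Y}, it suffices to prove the identity with each $T_n(\pmb{x}^k)$ replaced by $\widetilde Y_n(\pmb{x}^k)$. Applying the substitutions $\Psi\colon \theta(q)^2\mapsto p_1$ and $E_{2k}\mapsto p_{2k}$ to the generating function identity (\ref{QuasiModFormula}) with $F(X)=e^{X/2}$ and $\alpha=\tfrac{1}{2}$, the symmetric-function avatars satisfy
$$\widetilde G(X) \;:=\; \sum_{n\geq 0}\widetilde Y_n(\pmb{x})\frac{X^n}{n!} \;=\; \exp\!\left(\tfrac{p_1(\pmb{x})}{2}X\right)\cdot\widetilde\Omega(X/2), \qquad \widetilde\Omega(X) := \sum_{k\geq 0}\Psi(U_{2k})\,\frac{X^{2k}}{(2k+1)!}.$$

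The key observation is that $\widetilde\Omega(X/2)$ is an \emph{even} power series in $X$. Decomposing $\exp(p_1 X/2)=\cosh(p_1 X/2)+\sinh(p_1 X/2)$, the even and odd parts of $\widetilde G(X)$ are $\cosh(p_1 X/2)\,\widetilde\Omega(X/2)$ and $\sinh(p_1 X/2)\,\widetilde\Omega(X/2)$, which coincide with $\sum_{n}\widetilde Y_{2n}X^{2n}/(2n)!$ and $\sum_{n}\widetilde Y_{2n+1}X^{2n+1}/(2n+1)!$, respectively. Since $\widetilde Y_0=1$ renders the even part invertible as a formal power series, the factor $\widetilde\Omega(X/2)$ cancels in the ratio, producing
$$\frac{\sum_{n\geq 0}\widetilde Y_{2n+1}(\pmb{x})\,X^{2n+1}/(2n+1)!}{\sum_{n\geq 0}\widetilde Y_{2n}(\pmb{x})\,X^{2n}/(2n)!} \;=\; \tanh\!\left(\tfrac{p_1 X}{2}\right) \;=\; \tanh(T_1 X),$$
where we have used the identification $T_1=\widetilde Y_1=p_1/2$ that comes out of Theorem~\ref{T_and_Y}.

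From the generating function $\sec x+\tan x=\sum_{n\geq 0}A_n x^n/n!$ together with $\tanh y=-i\tan(iy)$, one obtains the expansion $\tanh y=\sum_{j\geq 0}(-1)^j A_{2j+1}\, y^{2j+1}/(2j+1)!$. Substituting $y=T_1 X$ and applying the Cauchy product, the coefficient of $X^{2n+1}/(2n+1)!$ yields
$$\widetilde Y_{2n+1}(\pmb{x})\;=\;\sum_{j=0}^n (-1)^j A_{2j+1}\binom{2n+1}{2j+1}\,T_1^{2j+1}\,\widetilde Y_{2n-2j}(\pmb{x}).$$
Dividing by $T_1^{2n+1}$, specializing $\pmb{x}$ to $\pmb{x}^k$, and invoking Theorem~\ref{T_and_Y} to replace each $\widetilde Y$ by the corresponding $T$ yields Conjecture~1.6. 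No serious obstacle arises; the entire argument turns on the single observation that the ratio of odd and even parts of $\widetilde G$ is $\tanh(T_1 X)$, whereupon the tangent numbers appear automatically.
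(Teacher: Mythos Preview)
Your proof is correct and takes a genuinely different route from the paper's. The paper proves Conjecture~1.6 by first establishing Theorem~\ref{Conjecture1} (the relation $T_n(p_1,p_2,\ldots)=f_n(p_1,-p_2,\ldots)$) and then transporting the already-known identity \eqref{Fel25} for the $f_n$ through this involution. In contrast, you bypass both Theorem~\ref{Conjecture1} and the external input \eqref{Fel25}: using only Theorem~\ref{T_and_Y}, you exploit the factorization $\widetilde G(X)=\exp(p_1X/2)\cdot(\text{even series})$ directly, so that the ratio of odd to even parts is $\tanh(T_1X)$ and the tangent numbers fall out of the Cauchy product. Your argument is shorter and more self-contained; the paper's argument has the complementary virtue of exhibiting the structural parallel between the $T_n$ and $f_n$ families and reusing the machinery already developed for Theorem~\ref{Conjecture1}.
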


These results illustrate the rich mathematics that arises from Ramanujan's $U_{2n}(q)$ by
letting $F(X)=\exp(X/2)$ and
$\alpha=1/2$ in (\ref{QuasiModFormula}).
Hence, it is natural to ask the following open-ended questions.

\begin{question} In view of Theorem~\ref{Omega_coeff2} and \cite[Th. 1.4]{AGO}, it is natural to ask whether the sequence $\{T_n(\pmb{x}^k)\}$ encodes algebraic information about the 
Pontryagin classes \cite{Hitchin} of spaces assembled from spin manifolds, perhaps through the framework of syzygies of the genera of numerical semigroups.
\end{question}

\begin{question}
Do other choices of weight 1 modular forms and power series $F(X)$ in (\ref{QuasiModFormula}) produce further important sequences of symmetric polynomials that are independent of the power sum symmetric functions $p_3, p_5, p_7,\dots$?
\end{question}

\begin{question}
Does (\ref{QuasiModFormula}) generalize to other families of even weight quasimodular forms (i.e., replacing $U_{2n}(q)$ and the generating function $\Omega(X)$)? Is there a general theory, perhaps arising from the theory of Jacobi forms? 
\end{question}

The proofs of these results make use of tools from umbral calculus, P\'olya's cycle index theorem and involutions on symmetric functions. In Section~\ref{NutsBolts}, we recall the nuts and bolts
that we require. Namely, in Subsection~\ref{TrSection}
we offer the required background on numerical semigroups and the symmetric polynomials $T_{r}(\pmb{x}^k)$ and their algebraic properties, and in Subsection~\ref{pr_section}, we recall the symmetric functions $f_r({\bf d}^k)$ that arise from restricted integer partitions. Finally, in Section~\ref{Proofs} we compile these results to prove Theorems~\ref{Omega_coeff}, \ref{Omega_coeff2},  \ref{T_and_Y},  \ref{Conjecture1}, and \ref{Conjecture2}.

\section*{Acknowledgements}
 The authors thank Toshiki Matsusaka, Badri Pandey, Ajit Singh for {\color{red}their} 
constructive comments on earlier versions of this paper. T.A. especially thanks Ira Gessel and Christophe Vignat for valuable discussions. The third author thanks the Thomas Jefferson Fund and the NSF
(DMS-2002265 and DMS-2055118).

\section{Nuts and Bolts}\label{NutsBolts}

In this section, we recall basic facts about two families of symmetric polynomials, one arising from numerical semigroups, and the other arising from restricted integer partitions.

\subsection{Symmetric functions arising from syzygies of numerical semigroups}\label{TrSection}

Let ${\mathbb N}$ denote the set of all non-negative integers.
A {\it numerical semigroup} $\langle d_1,\ldots,d_m\rangle$ is a subset $S_m\subset {\mathbb N}$ containing 0, closed under summation, with finite complement in ${\mathbb N}$. A set of generators $\{d_1,\ldots,d_m\}$ of a numerical semigroup $S_m$ 
satisfies $\gcd(d_1,\ldots,d_m)=1$ and is \emph{minimal} if none of its proper subsets generates the numerical semigroup $S_m$ \cite{fr86}. 
Its generating function is given by
\bea
H\left(S_m;z\right)=\sum_{s\;\in\;S_m}z^s,\qquad z<1,\qquad 0\in S_m,\label{y1}
\eea
and is known as {\em the Hilbert series} of $S_m$ and has a rational representation (Rep),
\bea
H\left(S_m;z\right)=\frac{Q\left(S_m;z\right)}{\prod_{i=1}^m\left(1-z^{d_i}
\right)},
\label{y2}
\eea
where 
\bea
Q\left(S_m;z\right)=1-\sum_{j=1}^{\beta_1}z^{C_{1,j}}+\sum_{j=1}^{\beta_2}      
z^{C_{2,j}}-\cdots\pm\sum_{j=1}^{\beta_{m-1}}z^{C_{m-1,j}},\qquad\sum_{k=0}^
{m-1}(-1)^k\beta_k=0,\label{y3}
\eea
while  $C_{k,j}\in{\mathbb N},\;\;1\le k\le m-1,\;\;1\le j\le \beta_k$,
and $C_{k,j}$ and $\beta_k$ stand for degrees of the $k$th syzygy and partial Betti's numbers, respectively \cite{st78}.
Here $\beta_0=1$.

\smallskip
We denote by ${\mathbb C}_k(S_m)$ the alternating power sum of syzygy degrees
\begin{equation}
{\mathbb C}_k(S_m):=\sum_{j=1}^{\beta_1}C_{1,j}^k-\sum_{j=1}^{\beta_2}C_{2,j}^k+\ldots-(-1)^{m-1}\sum_{j=1}^{\beta_{m-1}}C_{m-1,j}^k.  
\end{equation}
We have that \cite[Thm. 1]{fl17}
$$ {\mathbb C}_0(S_m)=1, \qquad
{\mathbb C}_r(S_m)=0,\quad 1\le r\le m-2,\qquad
{\mathbb C}_{m-1}(S_m)=(-1)^m(m-1)!\cdot \pi_m,
$$
where we let $\pi_m:=\prod_{i=1}^md_i$.
For $r\ge m,$ these sums 
 were calculated and furnished in \cite[(22)]{fl22}  as
\bea
{\mathbb C}_n(S_m)=\frac{(-1)^mn!}{(n-m)\:!}\;  \pi_m \cdot K_{n-m}(S_m),\qquad 
K_t(S_m)>0,\;\;t\ge 0,     \label{a5}
\eea
where $K_t(S_m)$ is a linear combination of the genera $G_0(S_m),\ldots,G_t(S_m)$ of the semigroup $S_m$, and $G_t(S_m):=\sum_{s\in\Delta_m}\!s^t$. 
Here,
$\Delta_m:={\mathbb N}\!\setminus S_m$ and $G_0(S_m)=\#\Delta_m$ denote a set
of gaps and a genus of $S_m$, respectively. 

A special kind of numerical semigroups $S_m,$ with the first Betti number $\beta_1=m-1,$ is called a symmetric {\em complete intersection} (CI), where the freedom to choose generators 
$\{d_1,\ldots,d_m\}$ of the semigroup $S_m^{CI}$ becomes restricted due to the relations
$$
\beta_k=\beta_{m-k-1},\quad\beta_1=m-1,\quad\beta_{m-1}=1,\quad C_{k,j}+C_{m-k-1,j}=C_{m-1,1},\quad 
\min\{d_1,\dots,d_m\}\geq m+1.
$$
The rational Rep of its Hilbert series reads
\bea
H\left(S_m^{CI};z\right)=\frac{\prod_{j=1}^{m-1}
\left(1-z^{e_j}\right)}
{\prod_{i=1}^m\left(1-z^{d_i}\right)}
\qquad \text{and} \qquad e_j\ge 2(m+1).     
\label{a13}
\eea
The tuple $\pmb{e}^{m-1}:=(e_1,\ldots,e_{m-1})$ presents the $m-1$ degrees of the first syzygy for symmetric CI semigroup $S_m^{CI}$. 

\smallskip
In contrast to $Q\left(S_m;z\right)$ of \eqref{y2}, \eqref{y3}), the numerator in (\ref{a13}) 
is the special symmetric polynomial $P_n({\bf e}^{m-1})$ 
introduced by the second author \cite{Fel}: 
\begin{align} \label{a9}
P_n(\pmb{x}^m)&:=\sum_{j=1}^mx_j^n-\sum_{1\le j<r}^m\left(x_j+x_r\right)^n+\sum_{1\le j<r<i}^m\left(x_j+x_r+x_i\right)^n-\ldots  -(-1)^m\left(\sum_{j=1}^mx_j\right)^n.
\end{align}
This polynomial allows \cite{Fel} for the construction of the polynomials $T_n(\pmb{x}^m)$ from
\bea
P_n(\pmb{x}^m)=\frac{(-1)^{m+1}n!}{(n-m)!}\;\chi_mT_{n-m}\left(\pmb{x}^m\right),\qquad \chi_m=\prod_{j=1}^mx_j,\quad 
T_0\left(\pmb{x}^m\right)=1,\label{a10}
\eea

These polynomials enjoy numerous properties, such as the following.

\begin{proposition}\cite[Lm. 1.3]{Fel}
We have the inequality
$T_n(x_1,\ldots,x_m)\ge 0$ whenever $x_1,\ldots,x_m\ge 0$.
\end{proposition}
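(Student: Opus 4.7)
The plan is to establish the stronger fact that $T_n(\pmb{x}^m)$ admits a manifestly nonnegative integral representation, by recognizing $P_n(\pmb{x}^m)$, up to a sign, as the iterated finite difference of the monomial $y^n$.

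Writing $\Delta_{x_i}g(y):=g(y+x_i)-g(y)$, the first step is to verify by inclusion-exclusion that
\[
\Delta_{x_1}\cdots\Delta_{x_m}(y^n)\big|_{y=0}=\sum_{I\subseteq[m]}(-1)^{m-|I|}\Bigl(\sum_{i\in I}x_i\Bigr)^{\!n},
\]
where the $I=\emptyset$ term vanishes for $n\ge 1$. Comparing with the alternating-subset-sum definition \eqref{a9} of $P_n(\pmb{x}^m)$ and tracking signs then yields
\[
(-1)^{m+1}P_n(\pmb{x}^m)=\Delta_{x_1}\cdots\Delta_{x_m}(y^n)\big|_{y=0}.
\]

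Next, I would invoke the standard integral representation of iterated finite differences, which follows from iterating the fundamental theorem of calculus $\Delta_{x_i}g(y)=\int_0^{x_i}g'(y+u)\,du$:
\[
\Delta_{x_1}\cdots\Delta_{x_m}f(0)=\int_0^{x_1}\!\!\cdots\!\int_0^{x_m}f^{(m)}(u_1+\cdots+u_m)\,du_m\cdots du_1.
\]
Specializing to $f(y)=y^n$ with $n\ge m$ and combining with the defining identity \eqref{a10}, namely $P_n(\pmb{x}^m)=(-1)^{m+1}\tfrac{n!}{(n-m)!}\chi_m T_{n-m}(\pmb{x}^m)$, produces
\[
\chi_m\,T_{n-m}(\pmb{x}^m)=\int_0^{x_1}\!\!\cdots\!\int_0^{x_m}(u_1+\cdots+u_m)^{n-m}\,du_m\cdots du_1.
\]
The change of variables $u_i=x_iv_i$ absorbs $\chi_m$ and delivers the clean closed form
\[
T_{n-m}(\pmb{x}^m)=\int_0^{1}\!\!\cdots\!\int_0^{1}(x_1v_1+\cdots+x_mv_m)^{n-m}\,dv_m\cdots dv_1,
\]
valid as an identity of polynomials (two polynomials agreeing on the open orthant agree identically).

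The right-hand side is manifestly $\ge 0$ whenever $x_1,\ldots,x_m\ge 0$, and replacing $n$ by $n+m$ gives the proposition for every $n\ge 0$. The only delicate point is the sign bookkeeping in the first step identifying $(-1)^{m+1}P_n$ with the iterated difference; I do not anticipate a genuine obstacle beyond this routine inclusion-exclusion accounting.
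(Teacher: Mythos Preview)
Your argument is correct. The sign check $(-1)^{m+1}P_n(\pmb x^m)=\Delta_{x_1}\cdots\Delta_{x_m}(y^n)\big|_{y=0}$ goes through exactly as you indicate, the iterated fundamental theorem of calculus gives the multiple integral, and the final formula
\[
T_r(\pmb x^m)=\int_{[0,1]^m}(x_1v_1+\cdots+x_mv_m)^{r}\,dv_1\cdots dv_m
\]
is a genuine polynomial identity, from which nonnegativity on the closed orthant is immediate.

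Note, however, that the paper does not supply its own proof of this proposition: it is simply quoted from \cite[Lm.~1.3]{Fel}, so there is no in-text argument to compare against. What is worth pointing out is that your integral representation is precisely the analytic realization of the umbral identity the paper \emph{does} prove later, in Lemma~\ref{conjTA} together with Lemma~\ref{lemma1}(3): there one shows $T_n(\pmb x^k)=\bigl(\sum_i \mathcal D\,x_i\bigr)^n$ with $\mathcal D_r=\tfrac1{r+1}$, and since $\tfrac1{r+1}=\int_0^1 v^r\,dv$ are the moments of the uniform law on $[0,1]$, this umbral expression is exactly your expectation $\mathbb E\bigl[(\sum_i x_iV_i)^n\bigr]$ with $V_i$ i.i.d.\ uniform. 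The paper reaches that identity via monomial symmetric functions and an inclusion--exclusion on subsets of variables; you reach the same endpoint more directly through iterated finite differences. Your route has the advantage of making positivity transparent without invoking any symmetric-function machinery, while the paper's route is tailored to the later comparison with $f_n$ via the involution $\omega$.
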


\begin{remark} From \eqref{a10}, one might not expect that these symmetric functions are independent of $p_{2j-1}(\pmb{x}^k),$ where $j>1$, which is a consequence of Theorem~\ref{T_and_Y}.
\end{remark}

For completeness, we remind the reader on the importance of these polynomials $T_n(\pmb{x}^m)$.
They are conjectured (see Conjecture~\ref{conj2.1} below) to play a central role in the explicit description of alternating power sums of syzygy degrees for the genera of the semigroups $S_m$.

\begin{conjecture} \label{conj2.1} If we let $\sigma_j=\sum_{i=1}^md_i^j$ and 
$\delta_j=\frac{\sigma_j-1}{2^j}$, then we have
\bea
K_t(S_m)=\sum_{r=0}^t \binom{t}r T_{t-r}(\sigma)G_r(S_m)+\frac{2^{t+1}}{t+1}T_{t+1} \label{a26}
(\delta),
\eea
where $T_r(X):=T_r(X_1,\dots,X_r)$ stand for symmetric polynomials in the power sums
\bea
X_k(\pmb{x}^m):=\sum_{j=1}^mx_j^k \qquad \text{for an $m$-tuple $\pmb{x}^m:=(x_1,\dots,x_m)$}. 
\label{a27}
\eea
\end{conjecture}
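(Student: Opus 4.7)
The plan is to convert both sides of the conjecture into exponential generating functions in a single formal variable $X$ and reduce the identity to a one-line Bernoulli-number computation. The first and most important step is to obtain a closed form for the EGF of $T_n(\pmb{x}^m)$. Applying inclusion--exclusion to \eqref{a9} gives
\[
\sum_{n\ge 0}P_n(\pmb{x}^m)\,\frac{X^n}{n!} \;=\; 1-\prod_{i=1}^m(1-e^{x_iX}).
\]
Because $\prod_i(1-e^{x_iX})=(-1)^m\chi_m X^m+O(X^{m+1})$, this vanishes in degrees $1,\ldots,m-1$; using \eqref{a10} in degrees $\ge m$, the EGF of the $T_n$ collapses to the compact product
\[
\mathcal{T}(X)\;:=\;\sum_{n\ge 0} T_n(\pmb{x}^m)\,\frac{X^n}{n!}\;=\;\prod_{i=1}^m \frac{e^{x_iX}-1}{x_iX}\;=\;e^{p_1(\pmb{x}^m)X/2}\cdot\prod_{i=1}^m \frac{\sinh(x_iX/2)}{x_iX/2}.
\]
The second factor, which I denote $\widetilde{\mathcal T}(X)$, depends only on the even power sums of $\pmb{x}^m$ (it is an $\widehat A$-style factor).

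Next I would compute the EGF of $\{\mathbb{C}_n(S_m)\}$ via the Hilbert series. The identity $F(X):=\sum_n \mathbb{C}_n(S_m)X^n/n!=1-Q(S_m;e^X)$, combined with $Q(S_m;z)=\prod_i(1-z^{d_i})\cdot H(S_m;z)$ and $H(S_m;z)=\tfrac{1}{1-z}-\sum_{s\in\Delta_m}z^s$, yields, after applying Step~1 with $\pmb{x}^m=\pmb{d}^m$,
\[
F(X) \;=\; 1+(-1)^m\pi_m X^m\,\mathcal{T}_\sigma(X)\left[\frac{1}{e^X-1}+G(X)\right],
\]
where $\mathcal{T}_\sigma(X):=\mathcal{T}(X)\big|_{\pmb{x}^m=\pmb{d}^m}$ and $G(X):=\sum_n G_n(S_m)X^n/n!$. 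Extracting the coefficient of $X^{t+m}$ and comparing with \eqref{a5} --- carefully absorbing the $X^{m-1}$ contribution of $1/(e^X-1)=1/X-1/2+\cdots$ into $\mathbb{C}_{m-1}=(-1)^m(m-1)!\pi_m$ --- gives, as a power series in $X$,
\[
\mathcal{K}(X)\;:=\;\sum_{t\ge 0} K_t(S_m)\,\frac{X^t}{t!} \;=\; \mathcal{T}_\sigma(X)\,G(X)\;+\;\frac{\mathcal{T}_\sigma(X)}{e^X-1}\;-\;\frac{1}{X}.
\]

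The claimed RHS of Conjecture~\ref{conj2.1} has EGF $G(X)\mathcal{T}_\sigma(X)+\frac{\mathcal{T}_\delta(2X)-1}{X}$, so the conjecture reduces to the single identity $X\mathcal{T}_\sigma(X)/(e^X-1)=\mathcal{T}_\delta(2X)$. Writing $X/(e^X-1)=e^{-X/2}\cdot (X/2)/\sinh(X/2)$ and using the decomposition from Step~1, the linear exponentials cancel (this is precisely where $\delta_1=(\sigma_1-1)/2$ enters), and the identity collapses to
\[
\widetilde{\mathcal T}_\delta(2X)\;=\;\frac{X/2}{\sinh(X/2)}\cdot\widetilde{\mathcal T}_\sigma(X).
\]
Taking logarithms, the product expansion $\sinh y/y=\prod_{n\ge 1}(1+y^2/(n\pi)^2)$ together with $\zeta(2k)=(-1)^{k+1}(2\pi)^{2k}B_{2k}/(2(2k)!)$ yields $\log\widetilde{\mathcal T}(X)=\sum_{k\ge 1}\frac{B_{2k}\,p_{2k}}{2k\,(2k)!}X^{2k}$; applied on both sides with $4^k\delta_{2k}=\sigma_{2k}-1$, the two sides become identical.

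The hard part, in my view, is Step~1: guessing the clean product form of $\mathcal{T}(X)$ and thereby recognizing its even factor as the $\widehat A$-type $\prod\tfrac{\sinh}{(\,\cdot\,)}$. Once this closed form is in hand, Step~2 is bookkeeping --- the only subtlety being to track the $1/X$ pole in $1/(e^X-1)$ and its precise cancellation against $1/X$ in $\mathcal{T}_\sigma(X)/X$, which is exactly what explains the ``initial condition'' $\mathbb{C}_{m-1}=(-1)^m(m-1)!\pi_m$ --- and Steps~3--4 become a one-line Bernoulli computation. A secondary, purely formal issue is checking that the manipulations with Laurent series give the correct power-series identity for $\mathcal{K}(X)$, but this is routine once the $1/X$ terms are properly matched.
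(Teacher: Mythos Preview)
The paper does \emph{not} prove Conjecture~\ref{conj2.1}. It is stated in Section~\ref{TrSection} as motivation for the importance of the $T_n(\pmb{x}^m)$, and it is left open; the theorems actually proved are Theorems~\ref{Omega_coeff}, \ref{Omega_coeff2}, \ref{T_and_Y}, \ref{Conjecture1}, and \ref{Conjecture2}. So there is no ``paper's own proof'' to compare against.

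That said, your argument is correct and in fact settles the conjecture. Your Step~1, the closed form
\[
\mathcal{T}(X)=\sum_{n\ge 0} T_n(\pmb{x}^m)\,\frac{X^n}{n!}=\prod_{i=1}^m \frac{e^{x_iX}-1}{x_iX},
\]
is equivalent to the paper's Lemma~\ref{conjTA}: the umbral identity $T_n=(\sum_i \mathcal{D}_ix_i)^n$ with independent umbrae $\mathcal{D}_i$ (and $e^{\mathcal{D}z}=(e^z-1)/z$ as in the proof of Lemma~\ref{lemma1}) is exactly the EGF product above. Your Step~2 is routine bookkeeping once one notes $1-Q(S_m;e^X)=\sum_n \mathbb{C}_n X^n/n!$ and $\prod_i(1-e^{d_iX})=(-1)^m\pi_m X^m\,\mathcal{T}_\sigma(X)$; the pole handling is correct, since $\mathcal{T}_\sigma(X)/(e^X-1)-1/X$ is a genuine power series with the right coefficients. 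The reduction in Steps~3--4 to the single identity $X\,\mathcal{T}_\sigma(X)/(e^X-1)=\mathcal{T}_\delta(2X)$ is valid, and your verification via $\log\widetilde{\mathcal T}(X)=\sum_{k\ge 1}\frac{B_{2k}p_{2k}}{2k\,(2k)!}X^{2k}$ (which is the paper's \eqref{sinc}) together with $4^k\delta_{2k}=\sigma_{2k}-1$ and $2\delta_1=\sigma_1-1$ goes through cleanly. In short: you have supplied a proof where the paper only states a conjecture, and your main tool (the product EGF for $T_n$) is the EGF reformulation of a lemma the paper already proves for other purposes.
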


\smallskip

\subsection{Symmetric functions arising from partitions}\label{pr_section}

\smallskip

Polynomials $T_n(\pmb{x}^k)$ are analogous to another sequence of polynomials $\{f_n\}$ that arise in the theory of integer partitions. Given ${\bf d}^k:=\{d_1,\ldots,d_k\}$, consider the restricted partition function $W(s;{\bf d}^k)$ (see \cite{Andrews}), which counts integer partitions of $s\ge 0$ into $k$ positive integers $(d_1,\ldots,d_k).$

\smallskip
It is well-known (see, for example \cite[Section 3.1]{fl17}) that $W(s;\pmb{d}^k)$ can be described in terms of finitely many quasipolynomials $W_q(s;\pmb{d}^k)$, each containing a single $q$-periodic function with $W_1(s;\pmb{d}^k)$ being a polynomial, given in the form
$$W(s;\pmb{d}^k)=\sum_{\substack{ q\vert d_i \\ 1\leq i\leq k}} W_q(s;\pmb{d}^k).$$
The $W_q(s;\pmb{d}^k)$'s are called the \emph{Sylvester waves}. In particular, $W_1(s;\pmb{d}^k)$ is referred to as the first Sylvester wave and has fascinating properties. For example, $W_1(s;\pmb{d}^k)$ is the polynomial part of 
$W(s;\pmb{d}^k)$ having an explicit formula \cite[(3.16) and (7.1)]{fl17} given by
\begin{align} \label{W1_f}
W_1(s;{\bf d}^k)=\frac1{(k-1)!\pi_k}\sum_{j=0}^{k-1} \binom{k-1}j f_j({\bf d}^k)s^{k-1-j},\qquad f_j({\bf d}^k)=\left(\sigma_1+\sum_{i=1}^k \mathcal{B}_id_i\right)^j,
\end{align}
where $\sigma_1:=\sum_{j=1}^kd_j$ and $\pi_k:=\prod_{i=1}^kd_i$. For our purpose in this paper, we will replace $\pmb{d}^k=(d_1,\dots,d_k)$ by $\pmb{x}^k=(x_1,\dots,x_k)$ and $\sigma_j$ by the power sum $p_j$ in the variables $(x_1,\dots,x_k)$.
 By treating the $\mathcal{B}_i$'s as independent random variables, the Bernoulli symbolism $(\mathcal{B}_ix_i)^n$ is understood as $\mathcal{B}_i^nx_i=B_i^nx_i^n$ where 
$$\frac{z}{\exp(z)-1}=\sum_{n\geq0} B_n\cdot \frac{z^n}{n!}
$$
generates the Bernoulli numbers.

\begin{examples}
The first few symmetric polynomials 
\begin{align} \label{black} f_n(\pmb{x}^k):=\left(p_1+\sum_{i=1}^k
\mathcal{B}_ix_i\right)^n
=f_n(p_1,\ldots,p_n), \end{align}
expanded in power sums (for example, see \cite[(7.2)]{fl17}), are:
\begin{align*}
f_0(\pmb{x}^k)&=1, \qquad
f_1(\pmb{x}^k) = \frac12p_1, \qquad
f_2(\pmb{x}^k) = \frac{3p_1^2-p_2}{12}, \qquad
f_3(\pmb{x}^k) = \frac{p_1^3 -p_1p_2}8, \\
f_4(\pmb{x}^k)& = \frac{15p_1^4 -30p_1^2p_2+5p_2^2 +2p_4}{240}, \qquad
f_5(\pmb{x}^k) = \frac{3p_1^5-10p_1^3p_2+5p_1p_2^2 +2p_1p_4}{96}, \\
f_6(\pmb{x}^k)& = \frac{63p_1^6 -315p_1^4p_2+315p_1^2p_2^2 +126p_1^2p_4-35p_2^3-42p_2p_4 -16p_6}{4032}.
\end{align*}
\end{examples}

\begin{remark} If we set all of the variables to $x$, then we obtain the  {\it convolution Stirling polynomials} studied by Knuth \cite{Knuth}
$$
f_1(x)=\frac{x}{2},\ \ f_2(x,x)=\frac{3x^2-x}{12}, \ \ f_3(x,x,x)=\frac{x^3-x^2}{8},\ \ f_4(x,\dots,x)=\frac{15x^4-30x^3+5x^2+2x}{240}\dots,
$$
which has generating function
$$
\left(\frac{z\exp(z)}{\exp(z)-1}\right)^x=\sum_{n=0}^{\infty}
f_n(x,\dots,x)\cdot \frac{z^n}{n!}.
$$
\end{remark}

\section{Proofs of Theorems~\ref{Omega_coeff}, \ref{Omega_coeff2}, \ref{T_and_Y}, \ref{Conjecture1}, and \ref{Conjecture2}}\label{Proofs}

Here we prove the results from the Introduction using the material in the previous section.

\subsection{Proof of Theorems~\ref{Omega_coeff} and \ref{Omega_coeff2}}
We consider (\ref{QuasiModFormula}) for a general power series
$$ F(X)=\sum_{m=0}^{\infty}a(m)X^m.
$$
Since we have
$$
\Omega(\alpha X)=\sum_{t=0}^{\infty} U_{2t}(q)\cdot \frac{\alpha^{2t} X^{2t}}{(2t+1)!},
$$
it follows that
$$
Y_n(F,\alpha;q)=\sum_{m+2t=n}
\frac{a(m)\cdot n! \alpha^{2t}}{(2t+1)!}\cdot \theta(q)^{2m}U_{2t}(q).
$$
Since $\theta(q)$ is a weight 1/2 modular form on $\Gamma_0(4)\subseteq \SL_2(\Z)$ (for example, see Chapter 1 of \cite{CBMS}) and $U_{2t}(q)$ is a weight $2t$ quasimodular form on $\SL_2(\Z)$, it follows that each $\theta(q)^{2m}U_{2t}(q)$ is a weight $m+2t=n$ quasimodular form on $\Gamma_0(4),$ which in turn implies that $Y_n(F,\alpha;q)$ is as well.

The proof of Theorem~\ref{Omega_coeff} (1-2) follows easily from the Taylor expansion of $F(X)=\exp(X/2)$ and direct algebraic manipulation.
To obtain Theorem~\ref{Omega_coeff2}, one simply substitutes (\ref{U_Trace}) (see also \cite[Thm. 1.3 (1)]{AOS}) into the formulas in Theorem~\ref{Omega_coeff} (2).

\subsection{Proof of Theorem~\ref{Conjecture1}}

We require a few elementary facts from probability theory in the context of umbral calculus (for background on umbral calculus, see \cite{RR}). 

\begin{lemma} \label{lemma1} If we let $\mathcal{B}^n=B_n$ (Bernoulli numbers), $\mathcal{C}_n:=\frac{(-1)^n}{n+1}$ and $\mathcal{D}_n:=\frac1{n+1}$, then for positive integers $n$ in umbral formalism (treating $\mathcal{B}, \mathcal{C}$ and $\mathcal{D}$ as random variables) the following are true:

\smallskip
\noindent
(1) $(\mathcal{B}+\mathcal{D})^n=0$.

\noindent
(2) $(\mathcal{B}+\mathcal{C})^n=(-1)^n$.

\noindent
(3) $1+\mathcal{C}=\mathcal{D}$.

\noindent
(4) $1+\mathcal{B}=-\mathcal{B}$. 

\noindent
(5) $(-\mathcal{B})^n=\mathcal{B}^n$ for $n\neq1$ with with $B_1 = -\frac12$.
\end{lemma}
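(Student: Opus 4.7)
The plan is to reduce each of the five umbral identities to a routine identity of exponential generating functions, exploiting that in the umbral calculus a relation $\mathcal{X}^n = \mathcal{Y}^n$ for all $n$ is equivalent to the identity $e^{\mathcal{X}z} = e^{\mathcal{Y}z}$ of formal EGFs, together with $e^{(\mathcal{X}+\mathcal{Y})z} = e^{\mathcal{X}z}\cdot e^{\mathcal{Y}z}$. In our setting one computes directly
\[ e^{\mathcal{B}z} = \frac{z}{e^z - 1}, \qquad e^{\mathcal{C}z} = \sum_{n\ge0} \frac{(-z)^n}{(n+1)!} = \frac{1 - e^{-z}}{z}, \qquad e^{\mathcal{D}z} = \frac{e^z - 1}{z}, \]
where the first is the defining EGF of the Bernoulli numbers and the other two are immediate from the geometric series for $e^{\pm z}$.

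From here each part becomes a one-line check. For (1), $e^{\mathcal{B}z} \cdot e^{\mathcal{D}z} = 1$, so every positive-degree coefficient of $e^{(\mathcal{B}+\mathcal{D})z}$ vanishes. For (2), $e^{\mathcal{B}z} \cdot e^{\mathcal{C}z} = (1-e^{-z})/(e^z-1) = e^{-z}$, giving $(\mathcal{B}+\mathcal{C})^n = (-1)^n$. For (3), $e^{z}\cdot e^{\mathcal{C}z} = (e^z - 1)/z = e^{\mathcal{D}z}$. For (4), $e^z \cdot e^{\mathcal{B}z} = ze^z/(e^z - 1) = -z/(e^{-z} - 1) = e^{-\mathcal{B}z}$, where the middle step is simply the substitution $z \mapsto -z$ in the Bernoulli EGF. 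Finally (5) is the standard vanishing $B_n = 0$ for odd $n \geq 3$, which makes $(-1)^n B_n = B_n$ for every $n \neq 1$.

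I would record them in the order presented, since each item is independent of the others and all the work lives in the opening EGF computation. There is no serious obstacle; the only subtle bookkeeping point is the consistency at $n = 1$, where (4) specializes to $1 + B_1 = -B_1$, namely $\tfrac12 = \tfrac12$, which is precisely why the convention $B_1 = -\tfrac12$ is called out explicitly in part (5).
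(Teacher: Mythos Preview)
Your proof is correct and follows essentially the same approach as the paper: the paper also reduces the umbral identities to generating-function identities, computing $e^{\mathcal{B}z}=\frac{z}{e^z-1}$ and $e^{\mathcal{D}z}=\frac{e^z-1}{z}$ and then verifying (4) and (1) exactly as you do, leaving the remaining parts to the reader. Your write-up simply carries out all five checks rather than two.
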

\begin{proof} Since these claims are straightforward, we only prove two of them.
To prove (4), we recall the generating function $$\exp(\mathcal{B}z)
=\sum_{n\geq0}\mathcal{B}^n\cdot \frac{z^n}{n!}=\sum_{n\geq0} B_n\cdot \frac{z^n}{n!}=\frac{z}{\exp(z)-1}.$$
This implies that
 $$\exp((1+\mathcal{B})z)=\frac{z\exp(z)}{\exp(z)-1}=\frac{-z}{\exp(-z)-1}=\exp((-\mathcal{B})z).$$
Similarly, we have that
$$\exp(\mathcal{D}z)=\sum_{n\geq0}\mathcal{D}^n\frac{z^n}{n!}=\sum_{n\geq0}\frac1{n+1}\frac{z^n}{n!}=\frac{\exp(z)-1}z.
$$
Therefore, (1) follows from the identity
 $$\exp((\mathcal{B}+\mathcal{D})z)=\frac{z}{\exp(z)-1}\cdot \frac{\exp(z)-1}z=1=\exp(0z),$$
 which means that $\mathcal{B}$ and $\mathcal{D}$ annihilate each other. 
\end{proof}

Using this lemma, we provide an umbral expression for $T_n(\pmb{x}^k)$ in the spirit of \eqref{black}. 

\begin{lemma} \label{conjTA} If $\mathcal{C}_i=\frac{(-1)^i}{i+1}$ is a sequence and $ k, n\in\mathbb{N}$, then we have
\begin{align*} T_n(\pmb{x}^k)=\left(p_1+\sum_{i=1}^k\mathcal{C}_i x_i\right)^n. \end{align*}
\end{lemma}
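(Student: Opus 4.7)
The plan is to compare the exponential generating functions of both sides. For the right-hand side, using the umbral generating function $e^{\mathcal{C}xz} = \sum_{n\ge 0} \mathcal{C}^n (xz)^n/n! = (1-e^{-xz})/(xz)$, I obtain immediately
$$\sum_{n\ge 0}\Bigl(p_1+\sum_{i=1}^k\mathcal{C}\, x_i\Bigr)^n \frac{z^n}{n!} = e^{p_1 z}\prod_{i=1}^k e^{\mathcal{C} x_i z} = e^{p_1 z}\prod_{i=1}^{k}\frac{1-e^{-x_iz}}{x_iz} = \prod_{i=1}^k \frac{e^{x_iz}-1}{x_iz},$$
where the last equality uses $p_1=\sum_i x_i$ and $e^{x_iz}(1-e^{-x_iz})=e^{x_iz}-1$. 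So the task reduces to showing that $\sum_r T_r(\pmb{x}^k) z^r/r!$ equals this same product.

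To compute the left-hand EGF, I first evaluate $\sum_n P_n(\pmb{x}^k) z^n/n!$ directly from the alternating-sum definition \eqref{a9}. Interchanging the sums over $n$ and subsets $S\subseteq[k]$ gives
$$\sum_{n\ge 0} P_n(\pmb{x}^k)\frac{z^n}{n!} = \sum_{\emptyset\neq S\subseteq[k]}(-1)^{|S|-1}\prod_{i\in S}e^{x_iz} = 1-\prod_{i=1}^k\bigl(1-e^{x_iz}\bigr) = 1-(-1)^k\prod_{i=1}^k(e^{x_iz}-1).$$
Observe that the right-hand side equals $1$ modulo $z^k$, so $P_n(\pmb{x}^k)=0$ for $1\le n\le k-1$ and $P_0=1$, agreeing with what the definition \eqref{a10} of $T_r$ requires.

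Now I substitute the defining identity \eqref{a10}, namely $P_{r+k}(\pmb{x}^k) = (-1)^{k+1}\frac{(r+k)!}{r!}\chi_k T_r(\pmb{x}^k)$ for $r\ge 0$, into the EGF above. After isolating the nonconstant part and dividing out $(-1)^{k+1}\chi_k z^k$, this yields
$$\sum_{r\ge 0}T_r(\pmb{x}^k)\frac{z^r}{r!}=\prod_{i=1}^k\frac{e^{x_iz}-1}{x_iz},$$
which is exactly the generating function computed for the right-hand side. Extracting $[z^n/n!]$ gives the claimed identity.

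The only delicate point is the umbral convention, i.e., that in the expression $(\sum_i \mathcal{C}\,x_i)^n$ the umbra $\mathcal{C}$ attached to each distinct index $i$ is treated as an independent copy—but this is exactly what the EGF factorization $\prod_i e^{\mathcal{C} x_i z}$ encodes, so the generating-function route bypasses any combinatorial bookkeeping. The relations $e^{\mathcal{C}z}=(1-e^{-z})/z$ and $1+\mathcal{C}=\mathcal{D}$ from Lemma~\ref{lemma1} are precisely what makes the two sides match; in fact the argument shows the equivalent umbral identity $T_n(\pmb{x}^k) = (\sum_{i=1}^k \mathcal{D}\, x_i)^n$ along the way.
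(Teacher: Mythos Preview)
Your proof is correct and takes a different, cleaner route than the paper. The paper expands $\bigl(\sum_i\mathcal{D}x_i\bigr)^n$ via the multinomial theorem into monomial symmetric functions $m_\lambda$, and then manipulates the alternating-sum definition of $P_{n+k}(\pmb{x}^k)$ through a double sum over subsets $\{j_1,\dots,j_t\}\supseteq\{s_1,\dots,s_\ell\}$, collapsing the inner alternating sum $\sum_{t=\ell}^k(-1)^{t-1}\binom{k-\ell}{t-\ell}$ to force $\ell=k$ and thereby isolate the factor $\chi_k$. Your approach bypasses all of this by passing to exponential generating functions: the umbral factorization $e^{p_1z}\prod_i e^{\mathcal{C}x_iz}=\prod_i(e^{x_iz}-1)/(x_iz)$ on one side, and the closed form $\sum_n P_n z^n/n!=1-\prod_i(1-e^{x_iz})$ on the other, match after stripping the constant term and dividing by $(-1)^{k+1}\chi_kz^k$. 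The EGF route is shorter and makes the role of the identity $e^{\mathcal{D}z}=(e^z-1)/z$ (equivalently $1+\mathcal{C}=\mathcal{D}$) completely transparent, whereas the paper's combinatorial expansion has the advantage of giving an explicit $m_\lambda$-expansion of $T_n$ along the way.
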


\begin{proof}
Given a partition $\lambda=(\lambda_1,\lambda_2,\dots)$, let $m_{\lambda}$ denote the associated \emph{monomial symmetric function} \cite[Chapter 7]{RPS} 
by $m_{\lambda}$ defined by
$$m_{\lambda}:=\sum_{\alpha} x_1^{\alpha_1}x_2^{\alpha_2}\cdots$$
where the sum ranges over all \emph{distinct} permutations $\alpha=(\alpha_1,\alpha_2,\dots)$ of the entries of $\lambda$. For example, $m_{(1,1)}=\sum_{i<j} x_ix_j$ and $m_{(2,1,1)}=\sum x_i^2x_jx_k$ with $i, j, k$ distinct.

Given a sequence $\mathcal{U}_1,\mathcal{U}_2, \dots$, we write $\mathcal{U}_{\lambda}=\mathcal{U}_{\lambda_1}\mathcal{U}_{\lambda_2}\cdots$. 
From Lemma ~\ref{lemma1} (3), we obtain $$\left (p_1+\sum_i \mathcal{C}_ix_i\right)^n=\left (\sum_i\mathcal{D}_ix_i\right)^n.
$$
Then, the multinomial theorem and umbral calculus together gives
\begin{align} \label{mono}
\left(\sum_{i=1}^k\mathcal{D}_i x_i\right)^n
& = \sum_{\lambda\vdash n} \binom{n}{\lambda_1,\lambda_2,\dots} \mathcal{D}_{\lambda} m_{\lambda}.
\end{align} 
We apply \eqref{mono} with $\mathcal{D}$ replaced by $\pmb{1}=(1,1,\dots)$ and $n$ replaced by $n':=n+k$, while we rank partitions according to their \emph{lengths}, to reformulate \eqref{a9} to obtain
\begin{align*}
\mathcal{P}_{n'}(\pmb{x}^k)
& = \sum_{t=1}^k (-1)^{t-1}\sum_{1\leq j_1<\cdots<j_t\leq n'} (x_{j_1}+\cdots+x_{j_t})^{n'} \\
& = \sum_{t=1}^k (-1)^{t-1}\sum_{1\leq j_1<\cdots<j_t\leq n'}  \sum_{\ell=1}^t \sum_{\substack{1\leq s_1<\cdots<s_{\ell}\leq n' \\ s_1,\dots,s_{\ell}\in\{j_1,\dots,j_t\}}}
\sum_{\substack{\mu\vdash n' \\ \ell(\mu)=\ell}} \binom{n}{\mu_1,\dots,\mu_{\ell}} m_{\mu}(x_{s_1},\dots,x_{s_{\ell}}) \\
& = \sum_{\ell=1}^k \sum_{t=\ell}^k (-1)^{t-1} \sum_{1\leq j_1<\cdots<j_t\leq n'} \sum_{\substack{1\leq s_1<\cdots<s_{\ell}\leq n' \\ s_1,\dots,s_{\ell}\in\{j_1,\dots,j_t\}}}
\sum_{\substack{\mu\vdash n' \\ \ell(\mu)=\ell}} \binom{n'}{\mu_1,\dots,\mu_{\ell}} m_{\mu}(x_{s_1},\dots,x_{s_{\ell}}) \\
& = \sum_{\ell=1}^k \sum_{t=\ell}^k (-1)^{t-1} \sum_{1\leq j_1<\cdots<j_t\leq n'} 
\sum_{\substack{\mu\vdash n' \\ \ell(\mu)=\ell}} \binom{n'}{\mu_1,\dots,\mu_{\ell}} 
\left\{ \sum_{\substack{1\leq s_1<\cdots<s_{\ell}\leq n' \\ s_1,\dots,s_{\ell}\in\{j_1,\dots,j_t\}}} m_{\mu}(x_{s_1},\dots,x_{s_{\ell}})  \right\}.  
\end{align*}
For a given $t$, ranging in $\{1,\dots,k\}$, counting the subsets $\{j_1,\dots,j_t\}$ that contain $\{s_1,\dots,s_{\ell}\}$ leads to
\begin{align*}
\mathcal{P}_{n'}(\pmb{x}^k)
&= \sum_{\ell=1}^k
\sum_{\substack{\mu\vdash n' \\ \ell(\mu)=\ell}} \binom{n'}{\mu_1,\dots,\mu_{\ell}} 
\sum_{1\leq s_1<\cdots<s_{\ell}\leq n'} m_{\mu}(x_{s_1},\dots,x_{s_{\ell}})   
\sum_{t=\ell}^k (-1)^{t-1}\binom{k-\ell}{t-\ell} \\
& = (-1)^{k-1} \sum_{\substack{\mu\vdash n+k \\ \mu_i\geq1 }} \binom{n+k}{\mu_1,\dots,\mu_k}   
m_{\mu}(x_1,\dots,x_k)   \\
& = (-1)^{k-1} \frac{(n+k)!}{n!}x_1\cdots x_k \sum_{\lambda\vdash n} \binom{n}{\lambda_1,\cdots,\lambda_k} 
\frac{m_{\lambda} (x_1,\dots,x_k)}{(\lambda_1+1)\cdots(\lambda_k+1)} \\
& = \frac{(-1)^{k-1}(n+k)! \cdot \prod_{j=1}^kx_j}{n!} \left(\sum_{i=1}^k\mathcal{D}_i x_i\right)^n,
\end{align*}
where the last equality is due to \eqref{mono}. The claim follows by applying \eqref{a10}.
\end{proof}

The proof of Theorem~\ref{Conjecture1} will also require the following fact about products of formal power series and the (unique) involution $\omega$ on symmetric functions, that is determined by the action $\omega(p_n)=(-1)^{n-1}p_n$ on power sum functions 
(see, for example \cite[Chapter 7]{RPS}). 

\begin{lemma} \label{invol} If $f(z)$ is a formal power series with constant term $1$ and $g(z):=\frac1{f(-z)}$, then 
$$\omega\left(\prod_if(zx_i)\right)=\prod_i g(zx_i).$$
\end{lemma}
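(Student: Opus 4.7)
The plan is to take logarithms of both sides, which converts the infinite products into exponentials of power-sum generating series, where the action of $\omega$ becomes transparent through its effect on the $p_n$.

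First, since $f(0)=1$, the series $\log f(z)=\sum_{n\geq 1} b_n z^n$ is a well-defined formal power series. Summing the expansions $\log f(zx_i) = \sum_{n\geq 1} b_n z^n x_i^n$ over $i$ produces $\sum_i \log f(zx_i) = \sum_{n\geq 1} b_n z^n p_n$, so exponentiating gives
$$\prod_i f(zx_i) \;=\; \exp\left(\sum_{n\geq 1} b_n z^n p_n\right).$$
This realizes the left-hand side as a formal power series in $z$ whose coefficients lie in the ring of symmetric functions and are polynomial expressions in $p_1,p_2,\dots$. Since $\omega$ is a ring homomorphism and $\omega(p_n)=(-1)^{n-1}p_n$, applying $\omega$ coefficient-wise yields
$$\omega\!\left(\prod_i f(zx_i)\right) \;=\; \exp\left(\sum_{n\geq 1} (-1)^{n-1} b_n z^n p_n\right) \;=\; \exp\left(-\sum_{n\geq 1} b_n (-z)^n p_n\right).$$

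For the right-hand side, the defining relation $g(z)=1/f(-z)$ gives $\log g(z)=-\log f(-z) = -\sum_{n\geq 1} b_n(-z)^n$, and the identical logarithmic computation produces
$$\prod_i g(zx_i) \;=\; \exp\left(-\sum_{n\geq 1} b_n(-z)^n p_n\right),$$
which matches the expression derived above.

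The only subtlety is purely technical: one must ensure that the exponential, logarithmic, and $\omega$-manipulations take place in a well-defined ambient ring. This is routine because every $z^n$-coefficient of $\prod_i f(zx_i)$ is a homogeneous symmetric function of degree $n$, expressible as a polynomial in $p_1,\dots,p_n$, so $\omega$ can legitimately be applied degree by degree. The identity is essentially a generalization of the classical formula $\omega(H(z))=E(z)$ relating the generating series of complete homogeneous and elementary symmetric functions, recovered when $f(z)=1/(1-z)$ and $g(z)=1+z$.
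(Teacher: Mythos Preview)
Your proof is correct and follows essentially the same route as the paper: take logarithms to express $\prod_i f(zx_i)$ as $\exp\bigl(\sum_n b_n z^n p_n\bigr)$, apply $\omega$ via $p_n\mapsto(-1)^{n-1}p_n$, and identify the result with $\exp\bigl(\sum_i\log g(zx_i)\bigr)$. The only cosmetic difference is that the paper applies $\omega$ at the level of the logarithm before exponentiating, whereas you exponentiate first and invoke that $\omega$ is a ring homomorphism; these are equivalent.
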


\begin{proof} We begin the proof by letting
$$L(z)=\log f(z)=\sum_{k\geq1} L_kz^k,
$$
which in turns give $\log g(z)=-L(-z).$
Therefore, we find that
\begin{align*} \log\left(\prod_i f(zx_i)\right) & =\sum_i\log f(zx_i) = \sum_i \sum_k L_k x_i^kz^k=\sum_k L_kp_kz^k,
\end{align*}
where $p_k$ is the $k$th power sum symmetric function. Applying $\omega,$ we obtain
\begin{align*} \omega\left(\log\left(\prod_i f(zx_i)\right)\right) & = \sum_k (-1)^{k-1}L_kp_kz^k = -\sum_i L(-zx_i)=\sum_i \log g(zx_i).
\end{align*}
The claimed product formula follows by exponentiation.
\end{proof}

\begin{proof}[Proof of Theorem~\ref{Conjecture1}]
Thanks to Lemma ~\ref{lemma1} (3) and (4), respectively, we have
$$\left(p_1+\sum_i \mathcal{C}_ix_i\right)^r=\left(\sum_i\mathcal{D}_ix_i\right)^r \qquad \text{and} \qquad
\left(p_1+\sum_i \mathcal{B}_ix_i\right)^r=\left(\sum_i(-\mathcal{B}_i)x_i\right)^r.
$$ 
By expressing these expressions in terms of the power sum bases for symmetric functions (utilizing the two umbral representations \eqref{black} and Lemma \ref{conjTA}), we obtain
\begin{align*} 
\left (\sum_i\mathcal{D}_ix_i\right)^r & = T_r(p_1,p_2,\dots) = \sum_{\lambda\vdash r} g^{\lambda} p_{\lambda}(\pmb{x}), \\
\left (\sum_i\widetilde{\mathcal{B}}_ix_i\right )^r & = f_r(p_1,p_2,\dots) = \sum_{\lambda\vdash r} \widetilde{g}^{\lambda} p_{\lambda}(\pmb{x}),
\end{align*}
for some coefficients $g^{\lambda}$ and $\widetilde{g}^{\lambda}$ where $\pmb{x}=(x_1,x_2,\dots)$ and $\widetilde{\mathcal{B}}_i=-\mathcal{B}_i$.

Next, one employs Lemma~\ref{invol} with the exponential generating functions 
$$f(z)=\frac{\exp(z)-1}z \qquad {\text {\rm and}}\qquad g(z)=\frac{z}{1-\exp(-z)},
$$
respectively, for $\mathcal{D}_k=\frac1{k+1}$ and $(-1)^kB_k$. By extracting the degree $r$ terms (in $z$) and noting that $\omega(p_k)=(-1)^{k-1}p_k$, we obtain the desired outcome that
$$T_r(p_1,p_2,p_3,p_4,\dots)=f_r(p_1,-p_2,p_3,-p_4,\dots).$$
\end{proof}

\subsection{Proof of Theorem~\ref{T_and_Y}}

Recalling (\ref{U_Trace}) and the identification $\Psi(E_{2n}(q))=p_{2n}(\pmb{x}^k),$ we define
$$u_{2n}(\pmb{x}^k):=\Psi(U_{2n}(q))
$$
and
\begin{align} \label{u_tilde}
\widetilde{u}_{2n}(\pmb{x}^k)
&:= \sum_{\lambda\vdash n} \phi_U(\lambda)\, \prod_{j=1}^n\Psi(-E_{2j}^{m_j}) \qquad
\text{whenever $\lambda=(1^{m_1},2^{m_2},\dots,n^{m_n})$}.
\end{align}

\begin{example} We list the first few of the above two sequences of polynomials:
\begin{align*} u_0&=1, \,\,\, \,\,  u_2=p_2, \,\,\, \,\, u_4=\frac{5p_2^2-p_4}3, \,\,\, \,\,  u_6=\frac{35p_2^3-42p_2p_4+16p_6}9. \\
\widetilde{u}_0&=1, \,\,\, \,\,  \widetilde{u}_2=- p_2, \,\,\, \,\,  \widetilde{u}_4=\frac{5p_2^2+p_4}3, \,\,\, \,\,  \widetilde{u}_6=-\frac{35p_2^3+42p_2p_4+16p_6}9. \\
\end{align*}
\end{example}

\begin{remark}
In this section, we make important use of P\'olya's cycle index theorem (PCIT) \cite[Section 5]{RPS}
$$\exp\left(\sum_{j\geq1}Z_j\,\frac{w^j}j\right)
=\sum_{n\geq0}\left(\sum_{\lambda\vdash n}  
\prod_{j=1}^n \frac1{m_j!} \left(\frac{Z_j}{j}\right)^{m_j} 
\right)w^n,
$$
where we require writing a partition in the \emph{frequency notation}  $\lambda=(1^{m_1}\dots t^{m_n})\vdash n$. PCIT is deeply intertwined with other key results and methods, offering a consistent approach to a variety of counting problems that are similar and adjacent to the results in this paper. Specifically, it provides a method for deriving generating functions for structures related to generalized Bernoulli and Bell polynomials \cite{Matsusaka}, Bernoulli-Barnes polynomials \cite{BB} (with interesting connection to Fourier Dedekind sums), and it forms a crucial part of the standard exponential generating functions due to Touchard \cite[ Eq'n. (5.30)]{RPS}.
\end{remark}

We proceed with some relevant preparatory results. The first of which, Lemma ~\ref{lm1conj5}), is an explicit and compact umbral representations of $u_{2n}$ and $\widetilde{u}_{2n}$ (hence of $U_{2n}$, by association) in the form of
$$u_{2n} =(2n+1) \left( \sum_{i=1}^k (1+2\mathcal{C}_i)x_i\right)^{2n} \qquad \text{and} \qquad 
\widetilde{u}_{2n} =(2n+1) \left( \sum_{i=1}^k (1+2\mathcal{B}_i)x_i\right)^{2n},$$
where $\mathcal{C}$ and $\mathcal{B}$ are defined in Lemma~\ref{lemma1}.
In this context, observe our use of the umbral mechanism for a $k$-tuple of symbol: 
$$\mathcal{B}_1^{a_1} \mathcal{B}_2^{a_2} \cdots\mathcal{B}_k^{a_k} \longrightarrow
B_{a_1} B_{a_2} \cdots B_{a_k},
$$
where $a_1 + a_2 + \cdots + a_k=n$.

\begin{lemma} \label{lm1conj5} For each nonnegative integer $n$, we have
\begin{align*}
\left(\sum_{i=1}^k (1+2\mathcal{C}_i)x_i\right)^n & = \begin{cases} \frac{u_n}{n+1} \qquad & \text{if $n$ is even} \\  \,\,\, 0 \qquad & \text{if $n$ is odd}, \end{cases}  \\
\left(\sum_{i=1}^k (1+2\mathcal{B}_i)x_i\right)^n & = \begin{cases} \frac{\widetilde{u}_n}{n+1} \qquad & \text{if $n$ is even} \\  \,\,\, 0 \qquad & \text{if $n$ is odd}. \end{cases}
\end{align*}
\end{lemma}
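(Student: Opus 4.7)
The plan is to prove both parts by matching exponential generating functions: in an auxiliary variable $z$, both sides will equal $\prod_{i=1}^k \sinh(zx_i)/(zx_i)$ for the $\mathcal{C}$-version, and the reciprocal $\prod_{i=1}^k zx_i/\sinh(zx_i)$ for the $\mathcal{B}$-version.

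\textbf{Umbral side.} First I would compute the univariate EGFs
$$e^{(1+2\mathcal{C})u}=e^u\cdot\frac{1-e^{-2u}}{2u}=\frac{\sinh u}{u},\qquad e^{(1+2\mathcal{B})u}=e^u\cdot\frac{2u}{e^{2u}-1}=\frac{u}{\sinh u},$$
using $e^{\mathcal{C}z}=(1-e^{-z})/z$, immediate from $\mathcal{C}_s=(-1)^s/(s+1)$, and the textbook $e^{\mathcal{B}z}=z/(e^z-1)$ already recalled in Lemma~\ref{lemma1}. Both outputs are even in $u$, which delivers the vanishing at odd $n$. Because distinct $\mathcal{C}_i$ (resp.\ $\mathcal{B}_i$) are independent umbrae, the multivariate EGFs factor as
$$\sum_{n\ge 0}\Bigl(\sum_{i=1}^k(1+2\mathcal{C}_i)x_i\Bigr)^n\frac{z^n}{n!}=\prod_{i=1}^k\frac{\sinh(zx_i)}{zx_i},\qquad \sum_{n\ge 0}\Bigl(\sum_{i=1}^k(1+2\mathcal{B}_i)x_i\Bigr)^n\frac{z^n}{n!}=\prod_{i=1}^k\frac{zx_i}{\sinh(zx_i)}.$$

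\textbf{Quasimodular side.} Next I would apply P\'olya's cycle index theorem to the trace formula (\ref{U_Trace}) for $u_{2n}=\Psi(U_{2n})$. Substituting the explicit $\phi_U$ and distributing $4^n=\prod_j 4^{jm_j}$ and $X^{2n}=\prod_j X^{2jm_j}$ over the parts of $\lambda=(1^{m_1}2^{m_2}\cdots)\vdash n$, PCIT collapses the partition sum to
$$\sum_{n\ge 0} u_{2n}(\pmb{x}^k)\,\frac{X^{2n}}{(2n+1)!}=\exp\Bigl(\sum_{i=1}^k\sum_{j\ge 1}\frac{B_{2j}(2Xx_i)^{2j}}{(2j)(2j)!}\Bigr).$$
The classical Bernoulli expansion $\log(\sinh u/u)=\sum_{j\ge 1}B_{2j}(2u)^{2j}/(2j\cdot(2j)!)$ (obtained by integrating $\coth v - 1/v$) identifies the inner double sum with $\sum_i\log(\sinh(Xx_i)/(Xx_i))$, so the exponential equals $\prod_i \sinh(Xx_i)/(Xx_i)$. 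The definition of $\widetilde{u}_{2n}$ amounts to flipping the sign of every $p_{2j}$, which negates the exponent and yields the reciprocal $\prod_i Xx_i/\sinh(Xx_i)$.

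Matching the two generating functions (identifying $z$ with $X$) and comparing coefficients of $z^{2m}/(2m)!$ delivers $(\sum_i(1+2\mathcal{C}_i)x_i)^{2m}=(2m)!\,u_{2m}/(2m+1)!=u_{2m}/(2m+1)$, together with the parallel $\mathcal{B}$-statement. The main technical hurdle is the PCIT bookkeeping: the factors $4^n$ and $X^{2n}$ must be carefully fused into $(2Xx_i)^{2j}$ so that the log-sinh expansion applies verbatim. All other inputs (independence of the umbrae, the multinomial theorem, and the classical Bernoulli expansion of $\log(\sinh u/u)$) are standard.
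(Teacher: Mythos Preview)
Your proposal is correct and follows essentially the same route as the paper: both arguments compute the exponential generating function of the umbral expression as a product of $\sinh(zx_i)/(zx_i)$ (or its reciprocal), then invoke P\'olya's cycle index theorem together with the Bernoulli expansion of $\log(\sinh u/u)$ to identify this product with the generating series for $u_{2n}/(2n+1)!$ (resp.\ $\widetilde u_{2n}/(2n+1)!$). The only cosmetic differences are that the paper writes out the $\mathcal B$-case and leaves the $\mathcal C$-case as ``similar,'' whereas you do the $\mathcal C$-case in full and obtain the $\mathcal B$-case by the sign flip $p_{2j}\mapsto -p_{2j}$.
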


\begin{proof} We only prove the second assertion, as the first one follows a similarly. To this end, we recall the series expansion \cite[1.518.1]{GR} for the reciprocal of what we call here the \emph{$\text{sinhc}$ function} 
$\text{sinhc}(t)=\frac{\sinh(t)}t$ (reminiscent of the \emph{sinc function}) given by
\begin{align} \label{sinc} \frac1{\text{sinhc}(t)} 
= \exp\left(-\sum_{n\geq1} \frac{4^n B_{2n}}{(2n)(2n)!}\cdot t^{2n}\right). \end{align}
On the other hand, the (usual) exponential generating function for the Bernoulli numbers takes the form
$$\exp(\mathcal{B}z)=\sum_{n\geq0} \mathcal{B}^n\frac{z^n}{n!} = \sum_{n\geq0} B_n \frac{z^n}{n!} = \frac{z}{\exp(z)-1} $$
from which it is natural to deduce that
\begin{align} \label{bernou} \exp((1+2\mathcal{B}_i)x_iz) = \sum_{n\geq0} ((1+2\mathcal{B}_i)x_i)^n\frac{z^n}{n!} = \frac{2x_iz\cdot \exp(x_iz)}{\exp(2x_iz)-1} = \frac1{\text{sinhc}(x_iz)}. \end{align}
Consequently, \eqref{bernou} implies the finite product 
\begin{align} \label{sincprod}
\prod_{i=1}^k\frac1{\text{sinhc}(x_iz)}  & = \exp\left(z\sum_{i=1}^k (1+2\mathcal{B}_i)x_i\right) 
 = \sum_{n\geq0} \frac{z^n}{n!} \left(\sum_{i=1}^k(1+2\mathcal{B}_i)x_i \right)^n.
\end{align}
Combining \eqref{sinc}, \eqref{sincprod} and P\'olya's cycle index theorem \cite[Section 5]{RPS}, we arrive at
\begin{align*}
\sum_{n\geq0} \frac{z^n}{n!} \left(\sum_{i=1}^k (1+2\mathcal{B}_i)x_i \right)^n 
&= \prod_{i=1}^k\frac1{\text{sinhc}(x_iz)}  
 =  \exp\left(-\sum_{n\geq1} \frac{4^n B_{2n} \, p_{2n}}{(2n)(2n)!}\cdot z^{2n}\right) \\
& = \sum_{n\geq0} z^{2n} 
\sum_{\lambda\vdash n} \left(\prod_{j=1}^n \frac1{m_j!}\left(\frac{- 4^j B_{2j}}{(2j)(2j)!}\right)^{m_j}\right) \Psi(E_{\lambda}).
\end{align*}
Compare the coefficients of $z^n$ on both sides, together with \eqref{u_tilde}, to complete the proof.
\end{proof}

In \cite[(25)]{Fel}), one of the authors proved the identity
\begin{align} \label{Fel_f} f_{2n+1} =  \sum_{j=1}^{2n+1} (-1)^{j+1} \binom{2n+1}j  f_1^j f_{2n+1-j}. \end{align}
The next result reproves \eqref{Fel_f} and extends it to all $f_n$.

\begin{lemma} \label{lm2conj5} For each positive integer $n$, we have the recursive formula that
$$f_n =  \begin{cases}  \sum_{j=1}^n (-1)^{j+1} \binom{n}j  f_1^j f_{n-j} \qquad & \text{if $n$ is odd}, \\
\sum_{j=1}^n (-1)^{j+1} \binom{n}j  f_1^j f_{n-j} + \frac{\widetilde{u}_n}{(n+1)2^n} \qquad & \text{if $n$ is even}. \end{cases}
$$
\end{lemma}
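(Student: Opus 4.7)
The plan is to analyze the exponential generating function $F(z) := \sum_{n\geq 0} f_n\, z^n/n!$ and to exploit a simple reflection symmetry that reduces everything to Lemma~\ref{lm1conj5}. First, starting from the umbral representation \eqref{black}, read as $f_n = \bigl(\sum_{i=1}^k (1+\mathcal{B}_i) x_i\bigr)^n$ with independent Bernoulli umbrae $\mathcal{B}_i$, and using the generating function $e^{\mathcal{B}y} = y/(e^y - 1)$, I would identify
$$F(z) \;=\; \prod_{i=1}^k \exp\bigl((1+\mathcal{B}_i)\, x_i z\bigr) \;=\; \prod_{i=1}^k \frac{x_i z \cdot e^{x_i z}}{e^{x_i z}-1}.$$
A direct cancellation then gives the key identity
$$\frac{F(z)}{F(-z)} \;=\; \prod_{i=1}^k e^{x_i z} \;=\; e^{p_1 z} \;=\; e^{2 f_1 z},$$
where I used $f_1 = p_1/2$.

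The decisive move is to set $G(z) := F(z)\, e^{-f_1 z}$, so that the displayed identity immediately yields $G(-z) = F(-z)\, e^{f_1 z} = F(z)\, e^{-f_1 z} = G(z)$, i.e., $G$ is an \emph{even} function of $z$. Moreover, a short algebraic simplification (multiply and divide each factor by $e^{-x_i z/2}$) produces
$$G(z) \;=\; \prod_{i=1}^k \frac{x_i z}{e^{x_i z/2} - e^{-x_i z/2}} \;=\; \prod_{i=1}^k \frac{1}{\text{sinhc}(x_i z/2)}.$$
By Lemma~\ref{lm1conj5} applied with $z$ replaced by $z/2$, the coefficient of $z^n/n!$ in this product is $\widetilde{u}_n/((n+1) 2^n)$ for even $n$, and $0$ for odd $n$.

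Finally, extracting the coefficient of $z^n/n!$ in $G(z) = F(z)\, e^{-f_1 z}$ gives
$$[z^n/n!]\, G(z) \;=\; \sum_{j=0}^n \binom{n}{j}(-1)^j f_1^j f_{n-j} \;=\; f_n - \sum_{j=1}^n (-1)^{j+1}\binom{n}{j} f_1^j f_{n-j},$$
so that $f_n = [z^n/n!]\, G(z) + \sum_{j=1}^n (-1)^{j+1}\binom{n}{j} f_1^j f_{n-j}$; plugging in the value of $[z^n/n!]\, G(z)$ from the previous paragraph yields both cases of the lemma. The only non-formal step is the identification of the product form of $F(z)$ and its simplification to a product of reciprocal $\text{sinhc}$'s; once $G(z)=G(-z)$ is in hand, the conclusion is a mechanical application of Lemma~\ref{lm1conj5} together with coefficient extraction.
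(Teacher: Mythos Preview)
Your proof is correct and is essentially the paper's argument recast in the language of exponential generating functions: the paper observes umbrally that $(f-f_1)^n=\bigl(\tfrac12\sum_i(1+2\mathcal{B}_i)x_i\bigr)^n$ and then invokes Lemma~\ref{lm1conj5}, which is precisely your identity $G(z)=F(z)e^{-f_1 z}=\prod_i 1/\text{sinhc}(x_iz/2)$ followed by coefficient extraction. The reflection step $F(z)/F(-z)=e^{2f_1 z}$ is a pleasant motivation for considering $G$, but is not actually needed once the product form of $G$ is established.
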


\begin{proof} Letting $\mathfrak{f}:=p_1+\sum_{i=1}^k\mathcal{B}_ix_i$ The use of umbral symbolism allows for the more compact reformulation
\begin{align} \label{f_umbral} (\mathfrak{f}-f_1)^n =  \begin{cases} \,\,\,\,\,\, 0 \qquad & \text{if $n$ is odd}, \\  \frac{\widetilde{u}_n}{(n+1)2^n} \qquad & \text{if $n$ is even}. \end{cases}
\end{align}
It suffices to only consider the case $n$ is even. Recalling  \eqref{black}, expand the left-hand side as
\begin{align*}
(\mathfrak{f}-f_1)^{2n} & = \left(p_1+\sum_{i=1}^k\mathcal{B}_ix_i -\frac12p_1\right)^{2n} = \frac1{4^n}\left(\sum_{i=1}^k (1+2\mathcal{B}_i)x_i \right)^{2n}
= \frac{\widetilde{u}_{2n}}{(2n+1)4^n},
\end{align*}
where the last equality is due to Lemma ~\ref{lm1conj5}.
\end{proof}

We have now assembled enough results to help us prove the next instrumental lemma.

\begin{lemma}  \label{conj5} For each integer $n\geq0$, we have the representations 
$$T_n=\frac1{(n+1)2^n} \sum_{j=0}^{\lfloor\frac{n}2\rfloor} \binom{n+1}{2j+1} p_1^{n-2j} 
u_{2j} \qquad \text{{\rm and}} \qquad
f_n=\frac1{(n+1)2^n} \sum_{j=0}^{\lfloor\frac{n}2\rfloor} \binom{n+1}{2j+1} p_1^{n-2j}\, \widetilde{u}_{2j}.$$
\end{lemma}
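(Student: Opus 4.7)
The plan is to observe that both umbral representations we have in hand — $T_n = \bigl(p_1 + \sum_i \mathcal{C}x_i\bigr)^n$ from Lemma \ref{conjTA} and $f_n = \bigl(p_1 + \sum_i \mathcal{B}x_i\bigr)^n$ from \eqref{black} — are related in a very simple way to the expressions $\bigl(\sum_i (1+2\mathcal{C})x_i\bigr)^n$ and $\bigl(\sum_i (1+2\mathcal{B})x_i\bigr)^n$ that Lemma \ref{lm1conj5} already evaluates. Indeed, because $p_1 = \sum_i x_i$, we have the ``doubling'' identity
\[
2\Bigl(p_1 + \sum_i \mathcal{U}\,x_i\Bigr) \;=\; p_1 + \sum_i (1+2\mathcal{U})x_i
\]
valid for any umbral symbol $\mathcal{U}$ (applied here with $\mathcal{U} = \mathcal{C}$ and $\mathcal{U}=\mathcal{B}$). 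So the essential step is just to raise this to the $n$th power and expand.

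First, I would apply the identity with $\mathcal{U}=\mathcal{C}$ to write
\[
2^n T_n \;=\; \Bigl(p_1 + \sum_i (1+2\mathcal{C})x_i\Bigr)^n,
\]
then expand by the binomial theorem to obtain $\sum_{k=0}^n \binom{n}{k} p_1^{n-k} \bigl(\sum_i (1+2\mathcal{C}_i)x_i\bigr)^k$. By Lemma \ref{lm1conj5}, the inner factor vanishes when $k$ is odd and equals $u_k/(k+1)$ when $k=2j$ is even, so
\[
2^n T_n \;=\; \sum_{j=0}^{\lfloor n/2\rfloor} \binom{n}{2j}\, p_1^{n-2j}\,\frac{u_{2j}}{2j+1}.
\]
A routine check that $\frac{1}{2j+1}\binom{n}{2j} = \frac{1}{n+1}\binom{n+1}{2j+1}$ (both equal $\frac{n!}{(2j+1)!(n-2j)!}$) converts this into the claimed formula for $T_n$.

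The second identity is proved identically: apply the doubling identity with $\mathcal{U}=\mathcal{B}$ to get $2^n f_n = \bigl(p_1 + \sum_i (1+2\mathcal{B})x_i\bigr)^n$, expand by the binomial theorem, and invoke the second part of Lemma \ref{lm1conj5} (which evaluates $\bigl(\sum_i (1+2\mathcal{B}_i)x_i\bigr)^{2j} = \widetilde{u}_{2j}/(2j+1)$ and vanishes in odd degree), finishing with the same binomial coefficient identity.

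There is really no serious obstacle here once the umbral representation of $T_n$ in Lemma \ref{conjTA} and the evaluations in Lemma \ref{lm1conj5} are in place: the entire content of the proposition is packaged in the one-line identity $2(p_1+\sum_i\mathcal{U}x_i) = p_1+\sum_i(1+2\mathcal{U})x_i$ together with the binomial expansion. The only mild care needed is to verify that the umbral manipulations (in particular pulling $p_1^{n-k}$ out of the umbral $n$th power) are legal — this is fine because $p_1$ carries no umbral symbol and so commutes with everything, and because the linear combinations $1+2\mathcal{U}$ on distinct indices $i$ are treated independently, exactly as in the proof of Lemma \ref{lm1conj5}.
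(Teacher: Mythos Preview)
Your argument is correct and in fact more direct than the paper's. The paper proves the lemma by induction on $n$: it first establishes the recursion of Lemma~\ref{lm2conj5} for the $f_n$ (namely $(f-f_1)^n$ equals $0$ or $\widetilde u_n/((n+1)2^n)$ according to parity), recasts the claim in the umbral form $f_n=\frac{1}{(n+1)2^n}\bigl\{\frac{(p_1+\widetilde u)^{n+1}-(p_1-\widetilde u)^{n+1}}{2\widetilde u}\bigr\}$, and then feeds the recursion into an inductive computation, treating the odd and even cases separately; the $T_n$ case is asserted to be analogous. Your route bypasses the recursion and the induction entirely: the doubling identity $2\bigl(p_1+\sum_i\mathcal U_i x_i\bigr)=p_1+\sum_i(1+2\mathcal U_i)x_i$ (which is precisely the algebraic content the paper uses inside the proof of Lemma~\ref{lm2conj5}) is applied once, the ordinary binomial theorem separates the umbra-free $p_1$ from the umbral part, and Lemma~\ref{lm1conj5} evaluates the remaining powers in one stroke. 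The justification you give for pulling $p_1^{n-k}$ outside the umbral evaluation is the right one; there is nothing further to worry about. The upshot is that your argument is shorter, treats $T_n$ and $f_n$ symmetrically, and shows that Lemma~\ref{lm2conj5} is not actually needed as a separate ingredient for this lemma.
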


\begin{proof} We proceed by induction on $n$. The base cases $n=0$ and $n=1$ are routine and hence omitted.  If we revert to the umbral mechanics, in the variable $\widetilde{u}$ only, the assertion amounts to $f_n=\frac1{(n+1)2^n} \left\{\frac{(p_1+\widetilde{u})^{n+1}-(p_1-\widetilde{u})^{n+1}}{2\widetilde{u}}\right\}$. 

\smallskip

\bf
\noindent The case $n$ odd: \rm We know from \eqref{Fel_f} and Lemma \ref{lm2conj5} that
$$f_{2n+1}= \sum_{j=1}^{2n+1} (-1)^{j+1} \binom{2n+1}j  f_1^j f_{2n+1-j}.$$
 In light of this, we may continue using the induction hypothesis and the seed $f_1=\frac12p_1$ to obtain
\begin{align*}
f_{2n+1} 
& = \sum_{j=1}^{2n+1} \frac{ (-1)^{j+1} \binom{2n+1}j p_1^j}{(2n+2-j)2^{2n+1}} \left\{\frac{(p_1+\widetilde{u})^{2n+2-j}-(p_1-\widetilde{u})^{2n+2-j}}{2\widetilde{u}}\right\}   \\
& = \frac1{(2n+2)2^{2n+1}} \sum_{j=1}^{2n+1} (-1)^{j+1} \binom{2n+2}j p_1^j \left\{\frac{(p_1+\widetilde{u})^{2n+2-j}-(p_1-\widetilde{u})^{2n+2-j}}{2\widetilde{u}}\right\}  \\
& = \frac1{(2n+2)2^{2n+1}} \left\{\frac{(p_1+\widetilde{u})^{2n+2}-(p_1-\widetilde{u})^{2n+2}}{2\widetilde{u}}\right\}.
\end{align*}
\bf The case $n$ even: \rm The argument is similar. We apply Lemma ~\ref{lm2conj5} and induction to write
\begin{align*}
f_{2n} 
& =  \frac{\widetilde{u}_{2n}}{(2n+1)4^n}+ \sum_{j=1}^{2n} \frac{(-1)^{j+1}} {(2n+1)2^{2n}} \binom{2n+1}j p_1^j \left\{\frac{(p_1+\widetilde{u})^{2n+1-j}-(p_1-\widetilde{u})^{2n+1-j}}{2\widetilde{u}}\right\}  \\
& = \frac1{(2n+1)2^{2n}} \left\{\frac{(p_1+\widetilde{u})^{2n+1}-(p_1-\widetilde{u})^{2n+1}}{2\widetilde{u}}\right\}.
\end{align*}
This completes the proof.
\end{proof}

\begin{proof}[Proof of Theorem~\ref{T_and_Y}] The proof follows by combining two structural results.
The first one concerning $T_n(\pmb{x}^k)$ comes from Lemma~\ref{conj5}. The other is our principal result from Theorem~\ref{Omega_coeff2} in regard to $Y_n(q)$. Their common formulation is based on \eqref{Yn_tilde}, which is due to the mapping $\Psi$.
\end{proof}

\subsection{Proof of Theorem~\ref{Conjecture2}}
In this subsection, we begin by recalling some calculations found in \cite[p. 59, (13)]{Fel}, namely that
\begin{align*}
\frac{T_3(\pmb{x}^k)}{T_1^3(\pmb{x}^k)} & = 3 \frac{T_2(\pmb{x}^k)}{T_1^2(\pmb{x}^k)}  -2 , \\
\frac{T_5(\pmb{x}^k)}{T_1^5(\pmb{x}^k)} & = 5 \frac{T_4(\pmb{x}^k)}{T_1^4(\pmb{x}^k)}  -20 \frac{T_2(\pmb{x}^k)}{T_1^2(\pmb{x}^k)}  +16, \\
\frac{T_7(\pmb{x}^k)}{T_1^7(\pmb{x}^k)} & = 7 \frac{T_6(\pmb{x}^k)}{T_1^6(\pmb{x}^k)} - 70 \frac{T_4(\pmb{x}^k)}{T_1^4(\pmb{x}^k)} 
+336 \frac{T_2(\pmb{x}^k)}{T_1^2(\pmb{x}^k)}  -272.
\end{align*}

The author \cite{Fel} also derived a similar formulation for the polynomials 
$f_n(\pmb{x}^k)$. However, a compact formula is missing from \cite[(25)-(26)]{Fel} and we fill that gap. 
For integers $n\geq1$, the modified version of \cite[page 61,  (25)]{Fel} is
\begin{align}  \label{Fel25}
\frac{f_{2n+1}(\pmb{x}^k)}{f_1^{2n+1}(\pmb{x}^k)}
= \sum_{j=0}^n (-1)^j A_{2j+1}\binom{2n+1}{2j+1} \frac{f_{2n-2j}(\pmb{x}^k)}{f_1^{2n-2j}(\pmb{x}^k)}.
\end{align}
Theorem~\ref{Conjecture2} is then an immediate consequence of Theorem~\ref{Conjecture1} and \eqref{Fel25} above.

\end{document}